\definecolor{myred}{RGB}{228,26,28}
\definecolor{myblue}{RGB}{55,124,184}
\definecolor{mygreen}{RGB}{77,175,74}
\setlist[itemize]{label={$\vcenter{\hbox{\tiny$\bullet$}}$}}
\renewcommand{\lq}{\leqslant}
\newcommand{\gq}{\geqslant}
\newcommand{\set}[1]{\left\{ #1 \right\}}
\newcommand{\Rb}{\mathbb{R}}
\newcommand{\Cb}{\mathbb{C}}
\newcommand{\Zb}{\mathbb{Z}}
\newcommand{\Forall}{\forall\ }
\newcommand{\Exist}{\exists\ }
\renewcommand{\d}{{\rm d}}
\newcommand{\prt}[1]{\left( #1 \right)}
\newcommand{\e}{{\rm e}}
\renewcommand{\i}{{\rm i}}
\newcommand{\per}{{\rm per}}
\newcommand{\norm}[1]{\left\| #1 \right\|}
\newcommand{\abs}[1]{\left| #1 \right|}
\newcommand{\Hc}{\mathcal{H}}
\newcommand{\Sc}{\mathcal{S}}
\newcommand{\Lc}{\mathcal{L}}
\newcommand{\C}{\mathbb{C}}
\newcommand{\N}{\mathbb{N}}
\newcommand{\R}{\mathbb{R}}
\newcommand{\Z}{\mathbb{Z}}
\newcommand{\LL}{\mathbb{L}}
\newcommand{\wt}{\widetilde}
\newcommand{\wh}{\widehat}
\newcommand{\wb}{\overline}
\newcommand{\Ecut}{E_{\rm cut}}
\theoremstyle{plain}
\newtheorem{proposition}{Proposition}
\theoremstyle{definition}
\newtheorem{remark}{Remark}
\theoremstyle{plain}
\newtheorem{theorem}{Theorem}
\theoremstyle{plain}
\newtheorem{lemma}{Lemma}
\title[Periodic Schr\"odinger equations with analytic potentials]{A priori error analysis of linear and nonlinear periodic Schr\"odinger equations with analytic potentials}
\author{Eric Canc\`es$^1$}
\author{Gaspard Kemlin$^{2}$}
\author{Antoine Levitt$^{3}$}
\address{$^1$CERMICS, \'Ecole des Ponts and Inria Paris, 6 \& 8 avenue Blaise Pascal, 77455 Marne-la-Vall\'ee, France}
\address{$^2$CERMICS, \'Ecole des Ponts and Inria Paris, 6 \& 8 avenue Blaise
  Pascal, 77455 Marne-la-Vall\'ee, France, now at LAMFA, Universit\'e de
  Picardie Jules Verne UFR des Sciences, 33 rue Saint-Leu, 80039 Amiens, France}
\address{$^3$LMO, Université Paris-Saclay, 307 rue Michel Magat, 91400 Orsay, France}
\address{emails: \texttt{eric.cances@enpc.fr},
  \texttt{gaspard.kemlin@u-picardie.fr},
  \texttt{antoine.levitt@universite-paris-saclay.fr}}
\begin{document}

\begin{abstract}
  This paper is concerned with the numerical analysis of linear and
  nonlinear Schrödinger equations with periodic analytic potentials. We prove
  that, for linear equations, when the potential is
  analytic in a strip of width $A$ of the complex plane, the solution is
  analytic in the same strip, ensuring an exponential convergence of the
  planewave discretization of the equation with rate $A$. On the other
  hand, for nonlinear equations, we find that the solution may be analytic only
  in a strip of width smaller than $A$. This behavior is illustrated by two
  examples using a combination of numerical and analytical arguments.
\end{abstract}

\maketitle

\section{Introduction}
In this paper, we study models inspired by Kohn--Sham Density
Functional Theory (KS-DFT). KS-DFT is currently the most popular
model in quantum chemistry and materials science as it offers a good
compromise between accuracy and computational efficiency. It aims at
computing, for a given configuration of the nuclei of the molecular system or
material of interest, the electronic ground-state energy and density. From the
latter, it is possible to compute the effective forces acting on the nuclei in
this configuration, and thus to identify the (meta)stable equilibrium
configurations of the system, or to simulate the dynamics of the molecular
system in various thermodynamic conditions. In materials science applications,
computations are commonly done  in a periodic simulation cell, which can be either the
unit cell of a crystal (for the special case of perfect crystals), or a
supercell (for all the other cases: crystals with defects, disordered alloys,
glassy materials, liquids...).

\medskip

We denote by $\LL=\Z \bm a_1 + \Z \bm a_2 + \Z \bm a_3$ the periodic lattice,
where $(\bm a_1,\bm a_2, \bm a_3)$ is a non-necessarily orthonormal basis of
$\R^3$, and by $\Omega=[0,1) \bm a_1 + [0,1) \bm a_2 + [0,1) \bm a_3$ the
simulation cell. Let us denote by
$$
L^2_{\per,\LL}\coloneqq \{u \in L^2_{\rm loc}(\R^3,\C) \; | \; u \; \text{ is
} \LL\mbox{-periodic} \}
$$
the Hilbert space of complex-valued $\LL$-periodic locally square integrable
functions on $\R^3$, endowed with its usual inner product.
The KS-DFT equations read
\begin{equation}\label{eq:KS-DFT}
  H_\rho \varphi_i = \lambda_i \varphi_i, \quad
  \langle   \varphi_i,\varphi_{j}\rangle_{L^2_{\per,\LL}} = \delta_{ij}, \quad \rho(\bm x)
  = 2\sum_{i=1}^{N_{\rm p}} |\varphi_i(\bm x)|^2,
\end{equation}
where $H_\rho$ is the Kohn--Sham Hamiltonian, a self-adjoint operator on
$L^2_{\per,\LL}$ bounded below and with compact resolvent.   The
$\varphi_i$'s are the Kohn--Sham orbitals, and the $\lambda_i$'s their
energies. Since $H_\rho$ depends on $\rho$, which in turn depends on the
eigenfunctions $\varphi_i$, $1 \lq i \lq N_{\rm p}$, \eqref{eq:KS-DFT} is a
nonlinear eigenproblem. The parameter $N_{\rm p}$ represents physically the
number of valence electron pairs per simulation cell and $\rho$ the
ground-state electronic density. We assume here, as is the case for most
physical systems, that
$\lambda_1 \lq \lambda_2 \lq \cdots \lq \lambda_{N_{\rm p}}$ are the lowest
$N_{\rm p}$ eigenvalues of $H_\rho$ ({\it Aufbau} principle). The Kohn--Sham
Hamiltonian with pseudopotentials reads
$$
H_\rho = - \frac 1 2 \Delta + V_{\rm nl} + V_{{\rm loc},\rho}^{\rm Hxc}
$$
where $V_{\rm nl}$ is a finite-rank self-adjoint operator (the nonlocal part
of the pseudopotential), and
$$
V_{{\rm loc},\rho}^{\rm Hxc}(\bm x) = V_{\rm loc}(\bm x) + V_{{\rm
    H},\rho}(\bm x) + V_{{\rm xc},\rho}(\bm x)
$$
is a periodic real-valued function depending (nonlocally) on $\rho$. The function
$V_{\rm loc}$ is the local component of the pseudopotential, the Hartree
potential $V_{{\rm H},\rho}$ is the unique solution with zero mean to the
periodic Poisson equation
$$
- \Delta V_{{\rm H},\rho}(\bm x) = 4\pi \left( \rho(\bm x) -
  \frac{1}{|\Omega|}\int_\Omega \rho \right), \qquad \int_\Omega V_{{\rm
    H},\rho} = 0,
$$
and the function $V_{{\rm xc},\rho}$, called the exchange-correlation
potential, depends on the chosen approximation of the exchange-correlation
energy functional. In the simple X$\alpha$ model \cite{slaterSimplificationHartreeFockMethod1951},
$V_{{\rm xc},\rho}(\bm x) = - C_{\rm D} \rho(\bm x)^{1/3}$, where
$C_{\rm D} > 0$ is the Dirac constant.

\medskip

It is not mandatory to use pseudopotentials in KS-DFT calculations. Some
softwares allow for all-electrons calculations in which the total
pseudopotential operator $V_{\rm nl}+V_{\rm loc}$ is replaced with a local
potential with Coulomb singularities at the positions of the nuclei. However,
most calculations are done with pseudopotentials, or use the formally similar
Projector Augmented Wave (PAW) method \cite{blochlProjectorAugmentedwaveMethod1994},
for three reasons: (i) core
electrons are barely affected by the chemical environment and can
usually be considered to occupy ``frozen states'', (ii) in heavy atoms,
core electrons must be dealt with relativistic quantum models which makes the
simulation more expensive from a computational viewpoint, (iii)
due to the Coulomb singularities, all-electron Kohn--Sham orbitals have cusps
at the positions of the nuclei and are therefore only Lipschitz continuous,
while the Kohn--Sham orbitals computed with pseudopotentials are much more
regular and can be well approximated with Fourier spectral methods (usually
called planewave discretization methods in the field).

\medskip

Several methods for constructing pseudopotentials have been proposed in the
literature, leading to local and nonlocal functions of different regularity.
As expected, the
rate of convergence of the planewave discretization method is directly linked
to the regularity of these functions. The {\it a priori} error analysis of
this problem was performed in~\cite{cancesNumericalAnalysisplanewave2012} for
pseudopotential with Sobolev regularity. It was proved in particular, for the
simple X$\alpha$ exchange-correlation functional, but also for the much more
popular local density approximation (LDA) exchange-correlation functional,
that if the local and nonlocal parts of the pseudopotential are
in the periodic Sobolev space of order $s> 3/2$, then the Kohn--Sham orbitals
$\varphi_i$ and the density $\rho$ are in the periodic Sobolev space of order
$s+2$, and (optimal) polynomial convergence rates were obtained in any Sobolev
spaces of order $r$ with $-s <  r < s+2$. In addition, as for linear
second-order elliptic eigenproblems, the error on the eigenvalues converges to
zero as the square of the error on the eigenfunctions evaluated in $H^1$-norm.
The analysis in~\cite{cancesNumericalAnalysisplanewave2012} covers for example
the case of Troullier--Martins pseudopotentials
\cite{troullierEfficientPseudopotentialsPlanewave1991}, for which
$s = \frac 72-\varepsilon$ for all $\varepsilon > 0$. On the other hand, these estimates are not sharp in the case of
Goedecker--Teter--Hutter (GTH)
pseudopotentials
\cite{goedeckerSeparableDualspaceGaussian1996,hartwigsenRelativisticSeparableDualspace1998},
for which the local and nonlocal contributions
are periodic sums of Gaussian-polynomial functions, and therefore have entire
continuations to the whole complex plane. Such pseudopotentials are
implemented in different DFT softwares, such as BigDFT
\cite{ratcliffFlexibilitiesWaveletsComputational2020}, Quantum Espresso
\cite{giannozziQUANTUMESPRESSOModular2009} or Abinit
\cite{gonzeAbinitProjectImpact2020,romeroABINITOverviewFocus2020},
as well as DFTK, a recent electronic
structure package in the \texttt{Julia}
language~\cite{herbstDFTKJulianApproach2021}.

\medskip

The purpose of this paper is to investigate this case.
While it has been known for a long time (see e.g.
\cite{bernsteinNatureAnalytiqueSolutions1904,friedmanRegularitySolutionsNonLinear1958,morreyAnalyticitySolutionsAnalytic1958,morreyAnalyticitySolutionsAnalytic1958b,petrovskiiAnalyticiteSolutionsSystemes1939}
and references therein for historical insight or
\cite{blattAnalyticitySolutionsNonlinear2020,hashimotoRemarkAnalyticitySolutions2006}
for more recent developments) that the solutions to elliptic equations on $\R^d$
with real-analytic data have an analytic continuation in a complex
neighborhood of $\R^d$, the size of this neighborhood is {\it a priori}
unknown. In the periodic case we are considering, the latter directly impact
the decay rate of the Fourier coefficients of the solution, hence the
convergence rate of the planewave discretization method. For pedagogical
reasons, we will work most of the time with one dimensional linear or nonlinear
Schr\"odinger equations, because (i) it is easier to visualize analytic or
entire continuations of functions originally defined on the real space $\R^d$
when $d=1$, and (ii) exponential convergence rates of planewave discretization
methods are easier to spot in 1D. However, most of our arguments
extend to the multidimensional case. In Section~\ref{sec:analytic}, we introduce a
hierarchy of spaces $(\Hc_A)_{A > 0}$ of complex-valued $2\pi$-periodic
functions on the real line having analytic continuations to the strip
$\R + \i (-A,A)$ of the complex plane.
We then pick a real-valued function $V \in \Hc_B$ for some
$B > 0$ and consider the one-dimensional Schr\"odinger operator $H=-\Delta + V$.
A low \emph{vs} high-frequency decomposition of the periodic $L^2$ space allows to
prove that for all $0 < A < B$, the solution $u$ to the linear equation $Hu=f$
is in $\Hc_A$ whenever $f \in \Hc_A$ (see Section~\ref{sec:lin_pb}), and that
the eigenfunctions of $H$ are in $\Hc_A$ (see Section~\ref{sec:lin_egval}). We
rely on this result to prove in Section~\ref{sec:numeric} that the planewave
discretization method converges exponentially fast in this case and we provide
a numerical illustration of these results.
We turn in Section~\ref{sec:nonlin} to the nonlinear setting,
where we present two examples for which we show, using a combination of
numerical and analytical tools, that the analyticity strip of the solution may
be much smaller than the one of the data. Finally, we consider in the Appendix the
multidimensional case, which is an immediate extension, and its application to
Kohn--Sham models.

\section{Spaces of analytic functions}
\label{sec:analytic}

Let us first introduce some notation. We denote by $L^2_\per(\Rb,\Cb)$ the
space of locally square integrable complex-valued $2\pi$-periodic functions on $\Rb$,
endowed with its natural inner product
$$
(u,v)_{L^2_\per}\coloneqq\int_0^{2\pi} \overline{u(x)} \, v(x) \, dx,
$$
and by $\Sc'_\per(\R,\C)$ the space of tempered complex-valued $2\pi$-periodic
distributions on $\R$. For each $u \in \Sc'_\per(\R,\C)$, we denote by
$(\widehat u_k)_{k \in \Z}$ the Fourier coefficients of $u$  with the
following normalization convention:
$$
\Forall u \in L^2_\per(\Rb,\Cb), \quad \Forall k \in \Z, \quad
\wh{u}_k \coloneqq (e_k,u)_{L^2_\per} = \frac{1}{\sqrt{2\pi}} \int_0^{2\pi}
u(x)\e^{-\i k x}\d x,
$$
where $e_k(x)\coloneqq \frac{1}{\sqrt{2\pi}}\e^{\i kx}$ is the $L^2_\per$-normalized
Fourier mode with wavevector $k \in \Z$. Recall that the $2\pi$-periodic
Sobolev spaces are the Hilbert spaces $H^s_\per(\Rb,\Cb)$, $s \in \R$, defined by
\begin{equation*}
  H^s_\per(\Rb,\Cb) \coloneqq \displaystyle{\set{
      u \in L^2_\per(\Rb,\Cb) \;\middle|\;
      \sum_{k\in\Zb}(1 + |k|^2)^s  \abs{\wh{u}_k}^2  < \infty
    }},
\end{equation*}
\begin{equation*}
  (u,v)_{H^s_\per}\coloneqq\sum_{k \in \Z}  (1 + |k|^2)^s \, \overline{\wh{u}_k} \, \wh{v}_k.
\end{equation*}
We will also use the self-explanatory notation $C^k_\per(\R,\R)$,
$C^k_\per(\R,\C)$, $L^p_\per(\R,\R)$, $L^p_\per(\R,\C)$ for
$k \in \N \cup \{\infty\}$ and $1 \lq p \lq \infty$, all these spaces being
endowed with their natural norms or topologies.  We now introduce, for any
$A>0$, the (Hardy-like) space
\[
  \Hc_A \coloneqq \displaystyle{\set{
      u \in L^2_\per(\Rb,\Cb) \;\middle|\;
      \sum_{k\in\Zb} w_A(k) \abs{\wh{u}_k}^2  < \infty   }}
\]
where
\[
  \quad w_A(k) \coloneqq \cosh(2Ak) = \tfrac 1 2 (\e^{2Ak}+\e^{-2Ak}),
\]
endowed with the inner product
\[
  (u,v)_{A} \coloneqq \sum_{k\in\Zb} w_A(k) \, \overline{\wh{u}_k} \, \wh{v}_k.
\]
Note that $\Hc_A$ can be canonically identified with the space of analytic functions
$$
\wt{\Hc}_A \coloneqq \set{u : \Omega_A \to \C
  \text{\normalfont~analytic}  \;\middle|\;
  \begin{matrix}
    [-A,A] \ni y\mapsto u(\cdot + \i y) \in  L^2_\per(\Rb,\Cb)
    \text{ \normalfont~continuous},\\
    \displaystyle\int_0^{2\pi}\left( \abs{u(x+\i A)}^2 + \abs{u(x-\i A)}^2 \right) \d x < \infty
  \end{matrix}},
$$
where $\Omega_A\coloneqq\Rb +\i (-A,A) \subset \C$ is the horizontal strip
of width $2A$ of the complex plane centered on the real axis,
endowed with the inner product
\[
  (u,v)_{\wt{\Hc}_A} = \frac{1}{2} \left(
    (u(\cdot+\i A),v(\cdot+\i A))_{L^2_\per} +
    (u(\cdot-\i A),v(\cdot-\i A))_{L^2_\per} \right).
\]
The canonical unitary
mapping $\Hc_A$ onto $\wt{\Hc}_A$ is the analytic continuation:
any function $u\in\Hc_A$ has a unique analytic continuation
$u : \Omega_A \to \C$ given by
$$
\Forall z = x+\i y \in \Omega_A, \quad u(z) = \sum_{k \in \Z}
\widehat u_k \, \frac{\e^{\i kz}}{\sqrt{2\pi}} = \sum_{k \in \Z} \widehat u_k
\,\e^{-ky} \, e_k(x).
$$
It can be easily seen that the Fourier coefficients of
$ u(\cdot\pm \i A)$ are the Fourier coefficients of $u$ rescaled by a factor
$\e^{\mp kA}$ and that the function
$(-A,A) \ni y \mapsto u(\cdot + \i y) = \sum_{k \in \Z} \widehat u_k \,\e^{-ky} \, e_k(\cdot)  \in L^2_\per(\R,\C)$
has a unique  continuation to $[-A,A]$ .  Therefore,
\[
  \begin{split}
    \|u\|_{\wt{\Hc}_A}^2  &=  \frac{1}{2} \left( \|
      u(\cdot+\i A)\|^2 _{L^2_\per} +  \|u(\cdot-\i A)\|^2 _{L^2_\per}
    \right) \\ &= \frac 12 \left(  \sum_{k \in \Z} \abs{\widehat u_k \,\e^{-kA}}^2 +
      \sum_{k \in \Z} \abs{\widehat u_k \,\e^{+kA}}^2  \right) \\ &=  \sum_{k \in \Z}
    w_A(k) \abs{\widehat u_k}^2 = \|u\|_{A}^2.
  \end{split}
\]

We record for future use the
\begin{proposition}
  \label{prop:Vbounded}
  Let $B > 0$.  Then, for all $0 < A < B$, the multiplication by a function
  $V_B \in \Hc_B$ defines a bounded operator on $\Hc_A$.
\end{proposition}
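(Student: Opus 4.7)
The plan is to work directly with Fourier coefficients, reduce the statement to a convolution estimate on $\ell^2(\Z)$ with exponential weights, and then exploit the strict inequality $A<B$ to make the relevant $\ell^1$ norm finite.

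First I would write the product $V_B u$ in Fourier space. With the normalization of the excerpt, the Fourier coefficients are given by the discrete convolution
\[
  \wh{(V_B u)}_k \;=\; \frac{1}{\sqrt{2\pi}}\sum_{\ell \in \Z} \wh{V_B}_{k-\ell}\,\wh{u}_\ell.
\]
The goal is to estimate $\|V_B u\|_A^2 = \sum_k w_A(k)\,|\wh{(V_B u)}_k|^2$ by $C\,\|u\|_A^2$. The key elementary fact I would record at the start is that $w_A(k)^{1/2}=\cosh(2Ak)^{1/2}$ is equivalent to $\e^{A|k|}$ up to universal constants, so it is enough to prove the analogous estimate with the weights $\e^{A|k|}$.

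Next I would use subadditivity of $|\cdot|$ under the triangle inequality, $|k|\leq |k-\ell|+|\ell|$, to obtain
\[
  \e^{A|k|}\,\bigl|\wh{(V_B u)}_k\bigr|
  \;\lq\; \frac{1}{\sqrt{2\pi}}\sum_{\ell\in\Z} \Bigl(\e^{A|k-\ell|}|\wh{V_B}_{k-\ell}|\Bigr)\Bigl(\e^{A|\ell|}|\wh{u}_\ell|\Bigr).
\]
Setting $f_m\coloneqq \e^{A|m|}|\wh{V_B}_m|$ and $g_\ell\coloneqq \e^{A|\ell|}|\wh{u}_\ell|$, the right-hand side is (up to a constant) the convolution $f\ast g$. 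Young's convolution inequality gives $\|f\ast g\|_{\ell^2}\lq \|f\|_{\ell^1}\|g\|_{\ell^2}$, and by the equivalence of weights $\|g\|_{\ell^2}$ is comparable to $\|u\|_A$. So the whole statement reduces to showing $\|f\|_{\ell^1}<\infty$.

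To control $\|f\|_{\ell^1}=\sum_m \e^{A|m|}|\wh{V_B}_m|$, I would apply Cauchy--Schwarz with the weight $w_B$:
\[
  \|f\|_{\ell^1}^{\,2}\;\lq\;\Bigl(\sum_m w_B(m)\,|\wh{V_B}_m|^2\Bigr)\Bigl(\sum_m \frac{\e^{2A|m|}}{w_B(m)}\Bigr)
  \;=\;\|V_B\|_B^{\,2}\,\sum_m \frac{\e^{2A|m|}}{\cosh(2Bm)}.
\]
The first factor is finite because $V_B\in\Hc_B$; the second factor converges precisely because $A<B$, since for large $|m|$ the summand behaves like $2\e^{-2(B-A)|m|}$. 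This is the one place where the strict inequality is used, and I expect it to be the only real content of the argument. The whole computation yields $\|V_B u\|_A \lq C(A,B)\,\|V_B\|_B\,\|u\|_A$, which is the claimed boundedness. An alternative, more geometric route would proceed via the identification with $\wt{\Hc}_A$, showing by Cauchy--Schwarz on Fourier series that any $V_B\in\Hc_B$ is bounded on the closed strip $\overline{\Op_A}$ for $A<B$ and then multiplying $L^2$-boundary values, but the convolution argument above is cleaner and avoids checking continuity of the boundary trace for the product.
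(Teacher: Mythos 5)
Your proof is correct, and it takes a genuinely different route from the paper's. The paper argues entirely in ``physical'' space: it uses the unitary identification of $\Hc_A$ with boundary values on the two lines $\R \pm \i A$ (the space $\wt{\Hc}_A$), so that multiplication by $V_B$ becomes pointwise multiplication of $L^2_\per$ boundary traces, and then bounds the operator norm by $\sqrt{2}\max\set{\norm{V_B(\cdot+\i A)}_{L^\infty_\per},\norm{V_B(\cdot-\i A)}_{L^\infty_\per}}$; the strict inequality $A<B$ enters only through the (implicitly used, not spelled out) fact that a periodic function analytic on $\Omega_B$ is bounded on the closed strip of half-width $A$. You instead work purely on the Fourier side, via the convolution theorem, Young's inequality $\norm{f\ast g}_{\ell^2}\lq\norm{f}_{\ell^1}\norm{g}_{\ell^2}$, and Cauchy--Schwarz against the weight $w_B$, with $A<B$ entering through the geometric summability of $\e^{2A|m|}/\cosh(2Bm)$. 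Your version is more self-contained (it never invokes boundary behaviour of analytic functions) and yields the explicitly quantitative bound $\norm{V_B}_{\Lc(\Hc_A)}\lq C(A,B)\norm{V_B}_B$ in terms of the $\Hc_B$ norm alone, which the paper's $L^\infty$-based estimate does not directly give. The paper's proof is shorter and makes the ``multiplication by a bounded function'' intuition transparent, and the $L^\infty$ bound it produces is precisely what is reused later in the Remark after \autoref{thm:lin_bounds}. Both are legitimate; if you wanted your argument to slot into the paper's downstream uses, you would still want to record, as the paper does, that $V_B(\cdot\pm\i A)\in L^\infty_\per$ whenever $A<B$.
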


\begin{proof}
  The proof is immediate from the analyticity of $V$ in
  $\Omega_{B}$, which implies that
  $\sup_{z, |{\rm Im(z)| \lq A}}|V(z)| < \infty$.
\end{proof}

\section{The linear case}

\subsection{The linear elliptic problem}
\label{sec:lin_pb}

We consider in a first stage the one-dimensional linear elliptic problem
\begin{equation}
  \label{eq:linear}
  \mbox{seek } u \in H^2_\per(\R,\C) \quad \mbox{such that} \quad  -\Delta u +
  Vu = f \text{ on } \Rb,
\end{equation}
where $V \in L^2_\per(\R,\R)$ and $f \in L^2_\per(\R,\C)$ are given $2\pi$-periodic
functions. We assume in this section that $V \gq 1$.
It is then standard that \eqref{eq:linear} has a unique
solution $u$ satisfying the {\it a priori} bounds
\begin{equation} \label{eq:classical_bounds}
  \norm{u}_{L^2_\per} \lq \frac{\norm{f}_{L^2_\per}}{\alpha} \quad \mbox{and}
  \quad  \norm{u}_{H^1_\per} \lq \norm{f}_{H^{-1}_\per},
\end{equation}
where $\alpha \coloneqq \lambda_1(-\Delta+V) \gq 1$ is the smallest eigenvalue
of the self-adjoint operator $H=-\Delta + V$ on $L^2_\per(\R,\C)$. By
bootstrap arguments, $u \in H^{s+2}_\per(\R,\C)$ whenever
$V$ and $f$ are in $H^s_{\rm per}$, for any $s \gq 0$. The following result
deals with the case of real-analytic potentials $V$ and right-hand sides $f$.
\begin{theorem}
  \label{thm:lin_bounds}
  Let $B>0$ and $V\in\Hc_B$ be real-valued and such that $V \gq 1$ on $\R$.
  Then, for all $0<A<B$ and $f\in\Hc_A$, the unique
  solution $u$ of \eqref{eq:linear} is in
  $\Hc_A$.  Moreover, we have the following estimate
  \begin{equation}\label{eq:bounds_linear_eq}
    \Exist C > 0 \text{ independent of } f \text{ such that }
    \norm{u}_A \lq C\norm{f}_A.
  \end{equation}
  As a consequence, if $V$ and $f$ are entire, then so is $u$.
\end{theorem}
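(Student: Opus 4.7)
The strategy I would use is the low vs.\ high frequency splitting that the authors advertise in the introduction. Fix $0 < A < B$ and denote, for an integer $N \gq 1$, by $P_N$ the orthogonal projector on $L^2_\per$ onto $\Span\set{e_k \;|\; |k| \lq N}$, and by $Q_N = I - P_N$ the complementary projector. Since $V \in L^2_\per \cap L^\infty_\per$ (analytic functions are continuous, hence $\Hc_B \subset L^\infty_\per$) and $f \in L^2_\per$, the classical bounds \eqref{eq:classical_bounds} give a unique solution $u \in H^2_\per$; the goal is to upgrade this to $u \in \Hc_A$.

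The plan is to write $u = P_N u + Q_N u$ and show that each piece lies in $\Hc_A$ once $N$ is large enough. The low-frequency part $P_N u$ is a trigonometric polynomial, so automatically in $\Hc_A$, with $\norm{P_N u}_A \lq \sqrt{w_A(N)}\norm{u}_{L^2_\per} \lq \sqrt{w_A(N)}\, \alpha^{-1}\norm{f}_{L^2_\per}$. Writing $v \coloneqq Q_N u$ and applying $Q_N$ to the equation yields, using $P_N v = 0$,
\[
  (-\Delta + Q_N V Q_N)\, v = Q_N f - Q_N V P_N u \eqqcolon g_N.
\]
The right-hand side $g_N$ lies in $\Hc_A$: indeed $Q_N f \in \Hc_A$ trivially and, by \autoref{prop:Vbounded}, $V$ acts boundedly on $\Hc_A$ so $Q_N V P_N u \in \Hc_A$ (here we use that $P_N u$ is a trigonometric polynomial, hence in $\Hc_A$).

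The crux is then to invert $-\Delta + Q_N V Q_N$ on $Q_N \Hc_A$. This is where the high-frequency cut-off plays its role: for any $w \in Q_N \Hc_A$, one has
\[
  \norm{-\Delta w}_A^2 = \sum_{|k|>N} w_A(k)\, k^4 |\wh w_k|^2 \gq N^4 \norm{w}_A^2,
\]
so $-\Delta$ is invertible on $Q_N \Hc_A$ with $\norm{(-\Delta)^{-1}}_{\Lc(Q_N\Hc_A)} \lq N^{-2}$. Combined with $M \coloneqq \norm{V}_{\Lc(\Hc_A)} < \infty$ from \autoref{prop:Vbounded}, a Neumann series argument shows that as soon as $N^2 > M$, the operator $I + (-\Delta)^{-1} Q_N V Q_N$ is invertible on $Q_N \Hc_A$ with bounded inverse. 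Hence there exists a unique $\wt v \in Q_N \Hc_A$ satisfying $(-\Delta + Q_N V Q_N)\wt v = g_N$, with $\norm{\wt v}_A \lq C \norm{g_N}_A$ for some $C$ depending only on $V$ and $A$.

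It remains to identify $\wt v$ with $v = Q_N u$. Both live in $Q_N L^2_\per$ (since $\Hc_A \subset L^2_\per$), both solve the same equation, and on $Q_N L^2_\per$ the operator $-\Delta + Q_N V Q_N$ is injective as soon as $N^2 > \norm{V}_{L^\infty_\per}$, because then $\norm{-\Delta w}_{L^2_\per} > \norm{Q_N V Q_N w}_{L^2_\per}$ for every nonzero $w \in Q_N L^2_\per$. Choosing $N$ large enough to satisfy both smallness conditions, we conclude $v = \wt v \in \Hc_A$ and therefore $u = P_N u + v \in \Hc_A$. The estimate \eqref{eq:bounds_linear_eq} follows by combining the two pieces, using $\norm{g_N}_A \lq \norm{f}_A + M \norm{P_N u}_A$ together with the $L^2$ a priori bound to absorb $\norm{P_N u}_A$ into $\norm{f}_A$. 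Finally, if $V$ and $f$ are entire, then $V \in \Hc_B$ and $f \in \Hc_A$ for all $A < B$ with arbitrarily large $B$, so the result applies for every $A > 0$, proving $u$ is entire.

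I expect the main subtlety to be the algebraic bookkeeping around the mixed term $Q_N V P_N u$ and the identification $v = \wt v$ (ensuring we do not conflate solvability in $\Hc_A$ with uniqueness in $L^2_\per$); the analytic heart of the argument, namely that $-\Delta$ beats $V$ at high frequencies in the $\Hc_A$ norm, is essentially immediate from Proposition~\ref{prop:Vbounded}.
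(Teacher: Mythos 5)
Your proposal is correct and follows essentially the same route as the paper's proof: the same low/high frequency split $L^2_\per = X_N \oplus X_N^\perp$, the same use of \autoref{prop:Vbounded} to control $V$ on $\Hc_A$, the same observation that $(-\Delta)^{-1}$ has norm $\lq N^{-2}$ on the high-frequency block, and the same Neumann series inversion of $I + (-\Delta)^{-1}Q_N V Q_N$. The only difference is that you spell out the identification $\wt v = Q_N u$ via injectivity of the projected operator on $Q_N L^2_\per$, a step the paper leaves implicit; this is a welcome clarification but does not change the argument.
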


\begin{proof}
  For $N>0$, we consider the decomposition
  $L^2_\per(\Rb,\Cb) = X_N \oplus X_N^\perp$ where
  \begin{equation} \label{eq:space_XN}
    X_N \coloneqq \mbox{Span}(e_k, \; |k| \lq N)= \set{u\in L^2_\per(\Rb,\Cb)
      \;|\; \wh{u}_k = 0,\ \Forall |k|>N}.
  \end{equation}
  Let $\Pi_N$ be the orthogonal projector on $X_N$
  and $\Pi_N^\perp \coloneqq 1 - \Pi_N$ the orthogonal projector on
  $X_N^\perp$. Note that the restriction of $\Pi_N$ to the Sobolev space
  $H^s_\per(\R,\C)$, $s > 0$, is also the orthogonal projector on $X_N$ for
  the $H^s_\per$ inner product, and that the same property holds for the
  Hilbert spaces $\Hc_A$.

  \medskip

  For a fixed $N$, we decompose $u$ as $u = u_1 + u_2$ with $u_1\in X_N$ and
  $u_2\in X_N^\perp$.  As $u_1$ has compact Fourier support, it obviously
  belongs to $\Hc_A$ and we have
  the estimate
  \begin{equation}
    \label{eq:u1}
    \norm{u_1}_A \lq \norm{u}_{L^2_\per} \sqrt{w_A(N)} \lq
    \frac{\norm{f}_{L^2_\per}}{\alpha} \sqrt{w_A(N)}.
  \end{equation}
  Let us show that, for $N$ large enough, $u_2$ also belongs to $\Hc_A$.
  Projecting $-\Delta u + Vu = f$ onto $X_N^\perp$, we get
  \begin{equation}\label{eq:proj_eq}
    T_{22}u_2 + V_{22}u_2 = f_2 - V_{21}u_1,
  \end{equation}
  where $T_{22}$ is the restriction to the invariant subspace $\Hc_A \cap X_N^\perp$ of
  the self-adjoint operator $-\Delta$ on $L^2_\per(\R,\C)$, and where, in view
  of Proposition~\ref{prop:Vbounded},
  $V_{22} \coloneqq \Pi_N^{\perp}V\Pi_N^{\perp} \in \Lc(\Hc_A \cap X_N^\perp)$,
  $V_{21} \coloneqq \Pi_N^{\perp}V\Pi_N \in \Lc(\Hc_A \cap X_N,\Hc_A \cap X_N^\perp)$ and
  $f_2 \coloneqq \Pi_N^{\perp}f \in \Hc_A \cap X_N^\perp$. The operator
  $T_{22}$ on $\Hc_A \cap X_N^\perp$ is bounded from below by $N^2$ and is therefore invertible with
  inverse $T_{22}^{-1}$ bounded by $N^{-2}$.
  As $f_2, V_{21}u_1\in\Hc_A \cap X_N^\perp$, we can
  therefore rewrite \eqref{eq:proj_eq} as
  \begin{equation}
    (1+T_{22}^{-1}V_{22}) u_2 = T_{22}^{-1} (f_2 - V_{21}u_1).
  \end{equation}
  Since $\|T_{22}^{-1}\|_{\Lc(\Hc_A\cap X_N^\perp)} \lq N^{-2}$ and
  $\|V_{22}\|_{\Lc(\Hc_A\cap X_N^\perp)} \lq \|V\|_{\Lc(\Hc_A)}$, we can
  choose $N$ large enough so that the operator
  $(1+T_{22}^{-1}V_{22})$ is invertible on $\Hc_A\cap X_N^\perp$. It
  then holds
  \begin{equation} \label{eq:u2}
    u_2 = (1+T_{22}^{-1}V_{22})^{-1} T_{22}^{-1} (f_2 - V_{21}u_1)
  \end{equation}
  and the result follows from
  $\|V_{21}\|_{\Lc(\Hc_{A})} \lq \|V\|_{\Lc(\Hc_{A})}$ and the bound
  \eqref{eq:u1} on $u_{1}$.
\end{proof}

\medskip

\begin{remark}[Additional regularity] We mention here some cases where we can
obtain additional regularity on the solution $u$ to \eqref{eq:linear}. First,
note that, if we additionally require that
$V(\cdot \pm \i B) \in L^{\infty}_{\per}(\Rb,\Cb)$, then the same reasoning
leads to $u\in\Hc_B$ if $f\in\Hc_B$. Next, one might wonder whether there exist $0 < B < B_0 < A$, $f \in \Hc_A$ and $V \in \Hc_B$ such that
$V \notin \Hc_C$ for all $B_0 \lq  C \lq A$ but nevertheless $u\in\Hc_A$. The
following example shows that this can happen.
Consider the $2\pi$-periodic real-valued functions $V$ and $u_0$ on $\R$ defined by
\[
\Forall x\in \R,\quad V(x) = 1+\frac1{\gamma + \sin^2(x)}
\quad\text{and}\quad u_0(x) = \frac{\gamma + \sin^2(x)}{2\gamma +
\sin^2(x)}.
\]
We have that (i) $V\in\Hc_B$ if and only if $B < B_0\coloneqq{\rm
arcsinh}(\sqrt{\gamma})$ and (ii) $u_0$ and $Vu_0$ both belong to $\Hc_A$ for any $A<A_0\coloneqq{\rm arcsinh(\sqrt{2\gamma})}$ (in
particular for any $A$ such that $B_0 \lq  A < A_0$). Therefore, if
we set $f\coloneqq -\Delta u_0 + Vu_0$, then $f\in\Hc_A$ for any $A$ such that
$B_0 < A < A_0$ and the unique solution to \eqref{eq:linear} with data $V$ and $f$
is $u=u_0 \in \Hc_A$ despite $V\notin\Hc_C$ for all $B_0 \lq  C  \lq A$.
\end{remark}

\medskip

\subsection{The linear eigenvalue problem}
\label{sec:lin_egval}

We now focus on the linear eigenvalue problem,
\begin{equation}
  \label{eq:lin_egval}
  -\Delta u + Vu = \lambda u, \quad \norm{u}_{L^2_\per(\Rb,\Cb)} = 1,
\end{equation}
where $V\in\Hc_B$ for some $B>0$. Using the same technique as for the proof
of Theorem~\ref{thm:lin_bounds}, we get the
\begin{theorem}
  \label{thm:lin_egval}
  Let $B>0$, $V\in\Hc_B$ be real-valued, and $(u,\lambda) \in H^2_\per(\Rb,\Cb)\times\Rb$
  be a normalized eigenmode of $H=-\Delta + V$, with isolated eigenvalue (i.e. a solution to
  \eqref{eq:lin_egval}).
  Then, $u$ is in $\Hc_A$ for all $0 < A < B$.
  As a consequence, if $V$ is entire, then so is $u$.
\end{theorem}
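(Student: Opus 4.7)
The plan is to mirror the low/high Fourier splitting used in the proof of \autoref{thm:lin_bounds}, applied this time to the homogeneous equation $(-\Delta + V - \lambda)u = 0$. Fix $0 < A < B$ and, for $N > 0$ to be chosen later, decompose $u = u_1 + u_2$ with $u_1 = \Pi_N u \in X_N$ and $u_2 = \Pi_N^\perp u \in X_N^\perp$, using the notation from \eqref{eq:space_XN}. Since $u_1$ has finite Fourier support it automatically lies in $\Hc_A$, and using $\norm{u}_{L^2_\per} = 1$ together with $w_A(k) \lq w_A(N)$ for $|k| \lq N$ gives the crude but sufficient bound $\norm{u_1}_A \lq \sqrt{w_A(N)}$.

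Projecting the eigenvalue equation onto $X_N^\perp$ yields
\[
(T_{22} - \lambda + V_{22}) u_2 = - V_{21} u_1,
\]
with $T_{22}$, $V_{22}$, $V_{21}$ defined exactly as in the proof of \autoref{thm:lin_bounds}. The shifted kinetic operator $T_{22} - \lambda$ is diagonal in the Fourier basis and bounded below by $N^2 - \lambda$, so for $N^2 > \lambda$ it is invertible on $\Hc_A \cap X_N^\perp$ with $\norm{(T_{22}-\lambda)^{-1}}_{\Lc(\Hc_A \cap X_N^\perp)} \lq 1/(N^2-\lambda)$. By \autoref{prop:Vbounded}, $V_{22}$ and $V_{21}$ are bounded by $\norm{V}_{\Lc(\Hc_A)}$ on the relevant spaces. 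A Neumann series argument identical to the one used previously then shows that $I + (T_{22}-\lambda)^{-1} V_{22}$ is invertible on $\Hc_A \cap X_N^\perp$ as soon as $N^2 - \lambda > \norm{V}_{\Lc(\Hc_A)}$, and yields
\[
\norm{u_2}_A \lq \frac{\norm{V_{21} u_1}_A}{N^2 - \lambda - \norm{V}_{\Lc(\Hc_A)}} \lq \frac{\norm{V}_{\Lc(\Hc_A)}}{N^2 - \lambda - \norm{V}_{\Lc(\Hc_A)}} \sqrt{w_A(N)}.
\]

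The natural choice is then $N = \sqrt{\norm{V}_{\Lc(\Hc_A)} + \lambda + 1}$, which makes the denominator equal to $1$ and yields $\norm{u_2}_A \lq \norm{V}_{\Lc(\Hc_A)} \sqrt{w_A(N)}$. Combining with the bound on $\norm{u_1}_A$ via the triangle inequality delivers the announced estimate. No real obstacle is expected here, since the argument is structurally the same as in the source case: the only new feature is the spectral shift $-\lambda$, which forces the Fourier cutoff $N$ to grow like $\sqrt{\lambda}$ and accounts for the $\lambda$-dependence in the final bound. For the entire case, if $V \in \Hc_B$ for every $B > 0$, then applying the result for each $A > 0$ shows that $u$ has an analytic continuation to every horizontal strip $\Op_A$, hence an entire continuation to $\C$.
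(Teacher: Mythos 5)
Your proof is correct and follows essentially the same route as the paper: same low/high Fourier decomposition $u = u_1 + u_2$, same Neumann-series argument on $X_N^\perp$, and the same choice of cutoff $N = \sqrt{\norm{V}_{\Lc(\Hc_A)} + \lambda + 1}$. The only (cosmetic) difference is that you absorb the spectral shift $-\lambda$ into the diagonal operator, inverting $T_{22} - \lambda$ directly, whereas the paper keeps $T_{22}$ as the principal part and treats $V_{22} - \lambda$ as the perturbation — both yield the identical final estimate.
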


\begin{proof}
  The proof proceeds along the same lines as that of Theorem~\ref{thm:lin_bounds}. Let $(u,\lambda) \in H^2_\per(\Rb,\Cb)\times\Rb$ be a solution to
  \eqref{eq:lin_egval}. Using the same notation as in the proof of
  Theorem~\ref{thm:lin_bounds}, we decompose $u$ as $u = u_1 + u_2$ with
  $u_1\in X_N$ and $u_2\in X_N^\perp$, and observe that
  for $N$ large enough,
  \[
    u_2 = - (1 + T_{22}^{-1}(V_{22} - \lambda))^{-1} T_{22}^{-1} V_{21}u_1,
  \]
  with
  \[
    \norm{T_{22}^{-1}(V_{22}-\lambda)}_{\Lc(\Hc_A \cap
      X_N^\perp)} \lq \frac{\|V\|_{\Lc(\Hc_A)} + |\lambda|}{N^2}.
  \]
  Therefore, choosing $N$ large enough, we
  have
  $$
  \norm{(1 + T_{22}^{-1}(V_{22} - \lambda))^{-1} T_{22}^{-1}}_{\Lc(\Hc_A \cap
    X_{N}^\perp)} < 1,
  $$
  from which we deduce that $u_2\in\Hc_A$ and the result follows.
\end{proof}

\subsection{Planewave approximation of the linear Schr\"odinger equation}
\label{sec:numeric}

Using $X_N = \mbox{Span}(e_k, \; |k| \lq N) \subset H^1_\per(\R,\C)$ as a
variational approximation space for \eqref{eq:lin_egval}, we obtain the
finite-dimensional problem
\begin{equation}
  \label{eq:var_approx}
  \begin{cases}
    \text{seek } (u_N,\lambda_N) \in X_N \times \Rb \text{ such that }
    \norm{u_N}_{L^2_\per(\Rb,\Cb)}=1 \text{ and }\\
    \displaystyle{\Forall v_N \in X_N,\quad \int_0^{2\pi}  \overline{\nabla
        u_N} \cdot \nabla v_N +
      \int_0^{2\pi} V \overline{u_N} v_N = \lambda_N \int_0^{2\pi}
      \overline{u_N} v_N,}
  \end{cases}
\end{equation}
which is equivalent to seeking the eigenpairs of the Hermitian matrix
$H_N$
with entries
$$
[H_N]_{kk'} \coloneqq |k|^2 \delta_{kk'} + \widehat V_{k-k'}, \quad k,k' \in
\Z, \; |k| \lq N, \; |k'| \lq N.
$$
The following theorem states that if $V \in \Hc_B$ for some $B>0$, the
planewave discretization method has an exponential convergence rate. Note that
a similar result holds for the planewave approximation of the linear problem
$-\Delta u + Vu = f$, whenever $f \in \Hc_A$.
\begin{theorem} \label{thm:numerics}
  Let $B>0$, $V\in\Hc_B$ be real-valued, $i \in \N^\ast$ and $0 < A < B$. Let $\lambda_i$
  be the $i^{\rm th}$ lowest eigenvalue of the self-adjoint operator
  $H=-\Delta + V$ on $L^2_\per(\R,\C)$ counting multiplicities, and
  ${\mathcal E}_i={\rm Ker}(H-\lambda_i)$ the corresponding eigenspace. For
  $N$ large enough, we denote by
  $\lambda_{i,N}$ the $i^{\rm th}$ lowest eigenvalue of \eqref{eq:var_approx},
  and by $u_{i,N}$ an associated normalized eigenvector. Then, there is a
  constant $C_{i,A} \in \R_+$ such that, for $N$ large enough,
  \[
    d_{H^1_\per}(u_{i,N},{\mathcal E}_i) \lq C_{i,A} \exp\prt{-AN} \quad
    \mbox{and} \quad 0 \lq \lambda_{i,N} - \lambda_i \lq C_{i,A} \exp\prt{-2AN}.
  \]
\end{theorem}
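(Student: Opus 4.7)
The plan is a two-step reduction. First, invoke the classical Babuška--Osborn theory for conforming Galerkin approximation of self-adjoint eigenvalue problems to bound both errors in terms of the best $H^1_\per$-approximation error of the eigenfunctions in $X_N$. Second, use the analytic regularity established in \autoref{thm:lin_egval} to show that this best approximation error decays exponentially in $N$.

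For the first step, let $a(u,v) \coloneqq \int_0^{2\pi}\overline{\nabla u}\cdot\nabla v + \int_0^{2\pi} V \overline u \, v$ denote the sesquilinear form associated with $H$, shifted by a large enough constant if needed so as to be coercive on $H^1_\per$ (which merely translates every eigenvalue). Standard Galerkin theory then yields, for $N$ large enough, the existence of constants $C_j,C_j' > 0$ depending only on $j$, $V$ and the spectral gaps around $\lambda_j$ such that
\[
  d_{H^1_\per}(u_{j,N},{\mathcal E}_j) \lq C_j\,\eta_N \quad\mbox{and}\quad 0 \lq \lambda_{j,N} - \lambda_j \lq C_j'\,\eta_N^2,
\]
where $\eta_N \coloneqq \sup\set{\norm{u-\Pi_N u}_{H^1_\per} \mid u\in{\mathcal E}_j,\ \norm{u}_{L^2_\per}=1}$ and $\Pi_N$ is the orthogonal projector on $X_N$. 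The lower bound $\lambda_{j,N}\gq\lambda_j$ is immediate from the min-max principle applied to the conforming subspace $X_N\subset H^1_\per$.

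For the second step, I pick any $A'\in(A,B)$. By \autoref{thm:lin_egval}, every normalized $u\in{\mathcal E}_j$ lies in $\Hc_{A'}$, with $\Hc_{A'}$-norm uniformly bounded by some constant $M_{j,A'}$; this uniformity follows from finite-dimensionality of ${\mathcal E}_j$ combined with the fact that the bound in that theorem depends only on $V$ and $\lambda_j$. Using the Fourier characterization of $\Hc_{A'}$ together with the elementary lower bound $w_{A'}(k)\gq \tfrac12 \e^{2A'|k|}$, I then estimate
\[
  \norm{u-\Pi_N u}_{H^1_\per}^2 = \sum_{|k|>N}(1+k^2)|\wh{u}_k|^2 \lq \sup_{|k|>N}\frac{1+k^2}{w_{A'}(k)}\,\norm{u}_{A'}^2 \lq \wt C_{A'}(1+N^2)\e^{-2A'N}M_{j,A'}^2.
\]
Since $A'>A$, the polynomial factor $(1+N^2)$ is absorbed into $\e^{-2(A'-A)N}$, yielding $\eta_N\lq c_{j,A}\,\e^{-AN}$. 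Combined with Step 1, this delivers the two bounds of the theorem.

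The conceptual core of the result is already contained in \autoref{thm:lin_egval}, which provides the exponential decay of the Fourier coefficients of every eigenfunction; once this is granted, the proof reduces to standard Fourier truncation combined with Babuška--Osborn. The only mild technical point is to check that the constants $C_j, C_j'$ can be taken uniform in $N$ for $N$ large enough, which follows from the density of $\bigcup_N X_N$ in $H^1_\per$ ensuring the convergence hypotheses of the abstract framework.
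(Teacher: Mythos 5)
Your proposal is correct and mirrors the paper's own argument: both proofs cite the analytic regularity of the eigenfunctions from \autoref{thm:lin_egval} (using an intermediate strip width $A'\in(A,B)$), translate $\Hc_{A'}$-membership into exponential decay of the $H^1_\per$-best-approximation error via the Fourier characterization of $w_{A'}$, and then invoke the Babuška--Osborn theory for conforming Galerkin approximation of self-adjoint eigenproblems with compact resolvent. The only cosmetic difference is that the paper bundles the polynomial-times-exponential bound into a single constant $c_{A,B}=\bigl(\max_{k}(1+|k|^2)\e^{2A|k|}/w_{A'}(k)\bigr)^{1/2}$ whereas you isolate the $(1+N^2)$ factor and then absorb it using $A'>A$; both yield the same estimate.
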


\begin{proof}
  First, note that $-\Delta + V$ has compact resolvent so that its eigenvalues
  $\lambda_i$ are isolated.
  Let $0 < A < B$ and $A'=\frac{A+B}2$. We have
  $$
  \Forall v \in \Hc_{A'}, \quad \norm{v-\Pi_Nv}_{H^1_\per} \lq C_{A,B}
  \|v\|_{A'}\e^{-AN}  \quad $$ with $$ \quad C_{A,B}\coloneqq \left( \max_{k
      \in \Z} \frac{(1+|k|^2)\e^{2A|k|}}{w_{A'}(k)} \right)^{1/2} < \infty.
  $$
  The operator $H$ is self-adjoint on $L^2_\per(\R,\C)$ with form domain
  $H^1_\per(\R,\C)$ and we infer from Theorem~\ref{thm:lin_egval} that all the
  eigenfunctions of the operator $H$ are in $\Hc_{A'}$. Theorem~\ref{thm:numerics}
  then follows
  from classical arguments on the variational approximations of the eigenmodes
  of bounded below self-adjoint operators with compact resolvent (see
  e.g.~\cite[Theorems 8.1 and 8.2]{babuskaEigenvalueProblems1991}).
\end{proof}

\subsection{Numerical results in the linear case}\label{sec:numerics_lin}

\subsubsection{Computational framework}

In this paper, all the numerical tests are
realized with the DFTK software \cite{herbstDFTKJulianApproach2021}. This
\texttt{Julia} package uses
a planewave basis $X_N$, as defined in Section~\ref{sec:numeric}, through a
discretization parameter $\Ecut \coloneqq N^2/2$. Then, the
numerical strategy depends on the nature of the problem:
\begin{itemize}
  \item linear eigenproblems are solved with a LOBPCG solver
    (see e.g.\ \cite{nottoliRobustOpensourceImplementation2023});
  \item nonlinear eigenproblems (see Section~\ref{sec:gp})
    and nonlinear elliptic problems (see Section~\ref{sec:nonlin_ellip}) with
    source terms are solved by direct
    minimization of the corresponding energy, through the \texttt{Julia} implementation
    of the LBFGS minimization algorithm \cite{liuLimitedMemoryBFGS1989}.
\end{itemize}

\subsubsection{Results in the linear case}

In this section, we provide some numerical experiments that illustrate
Theorems~\ref{thm:lin_egval} and \ref{thm:numerics}. To this end, we consider the
potential $V$ defined by
\[
  \Forall x \in [0,2\pi],\quad
  V(x) = \frac{1}{\gamma + \sin^2(x)},
\]
with $\gamma = 1/500$.
The analytic continuation of $V$ has branching points at $\pi\Z \pm \i B_0$ with
\[
  B_0 = {\rm arcsinh}(\sqrt{\gamma})\approx0.0447,
\]
and a direct calculation shows that $V\in\Hc_B$ for any $B<B_0$. From the
previous results, we therefore expect the eigenfunctions of $-\Delta + V$ to
belong to $\Hc_{B}$ for any $B<B_0$ and the planewave approximation to converge with rate
proportional to $\exp(-2B_0N)$ for the eigenvalues and proportional to
$\exp(-B_0N)$ for the eigenfunctions in the $H^1$ norm.

\medskip

In Figure~\ref{fig:fourier_linear}, we compute numerically the ground state $u_1$ of
$-\Delta + V$ with discretization parameter
$\Ecut = 5\cdot10^5$. In order to be able to properly spot the convergence
of the Fourier coefficients, we use quadruple precision and the tolerance of the linear solver is set to
$10^{-20}$. As expected, the Fourier coefficients of (the
numerical approximation of) $u_1$
decrease with rate of order $\sqrt{w_{B_0}(k)}$, which is confirmed by looking
at the convergence of two successive nonzero Fourier coefficients, since
\[
  \lim\limits_{k\to\infty}
  \log\prt{\sqrt{\frac{\cosh\prt{2B_0k}}{\cosh\prt{2B_0(k+1)}}}} = -B_0.
\]

\medskip

In Figure~\ref{fig:discretization}, we solved the same problem for various $N$'s,
and computed the error on the eigenvalues and the eigenvectors in the $H^1$ norm
with respect to a reference solution obtained with $N=\sqrt{2\Ecut} = 1000$. This time, the
tolerance of the linear solver is set to $10^{-13}$ and we use standard double
precision. The convergence
rates predicted by Theorem~\ref{thm:numerics} are observed, as expected. We can also
remark that the error on the eigenvalues seems to decrease slightly faster (with
an additional $1/N^2$ factor), the asymptotic rate being still given by the
exponential factor $\exp(-2B_0N)$.

\begin{figure}[h!]
  \includegraphics[width=\linewidth]{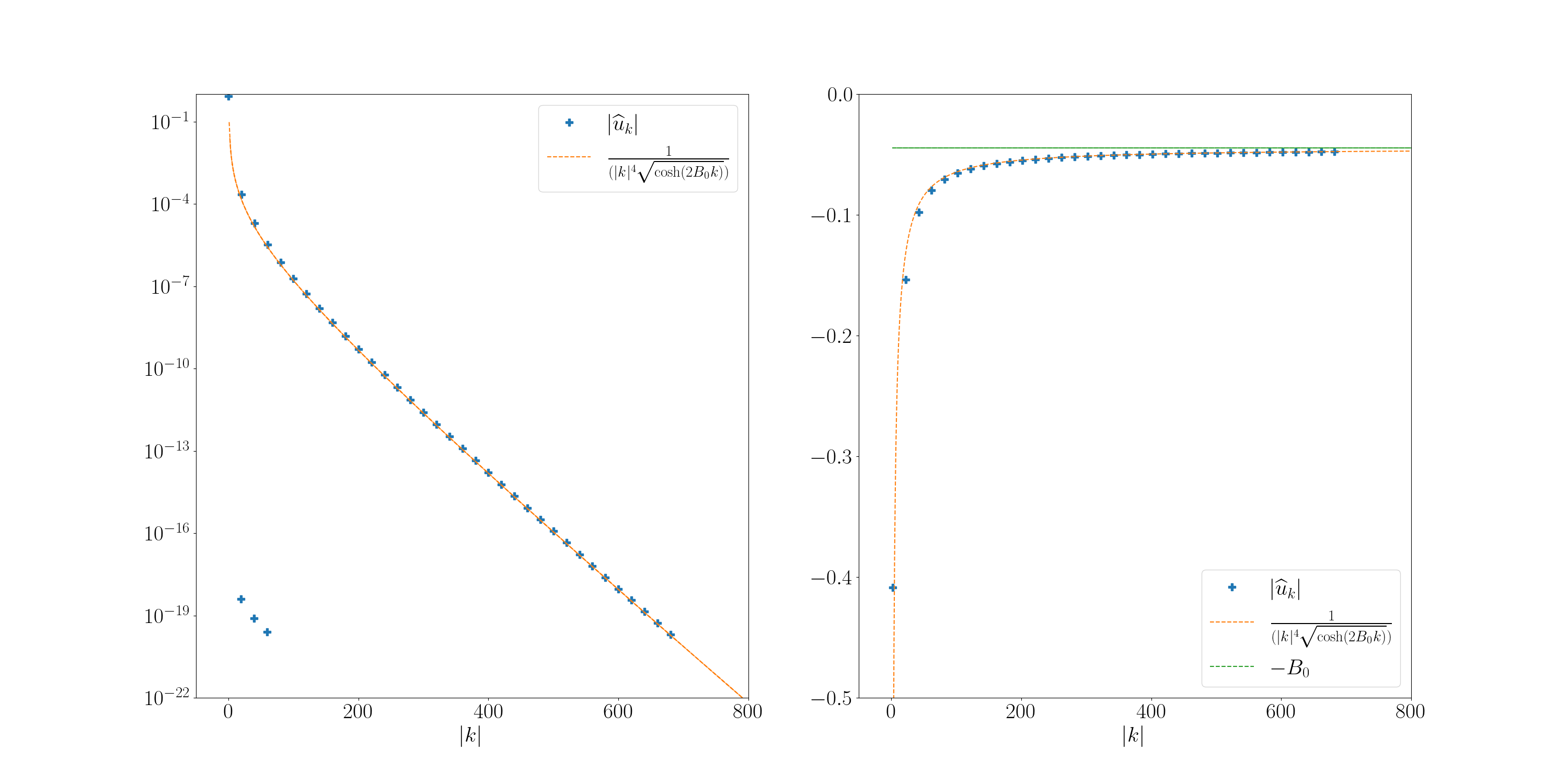}
  \caption{[Linear eigenvalue problem] (Left) Fourier coefficients of (the
    numerical approximation of) $u_1$:
    they seem to decrease like $1/(\abs{k}^\alpha\sqrt{\cosh{2B_0k}})$ with
    $\alpha = 4$, the prefactor $2B_0$ in the cosh being confirmed by the plot
    on the right. (Right) Logarithm of the ratio of two successive nonzero
    Fourier coefficients.}
  \label{fig:fourier_linear}
\end{figure}

\begin{figure}[h!]
  \includegraphics[width=\linewidth]{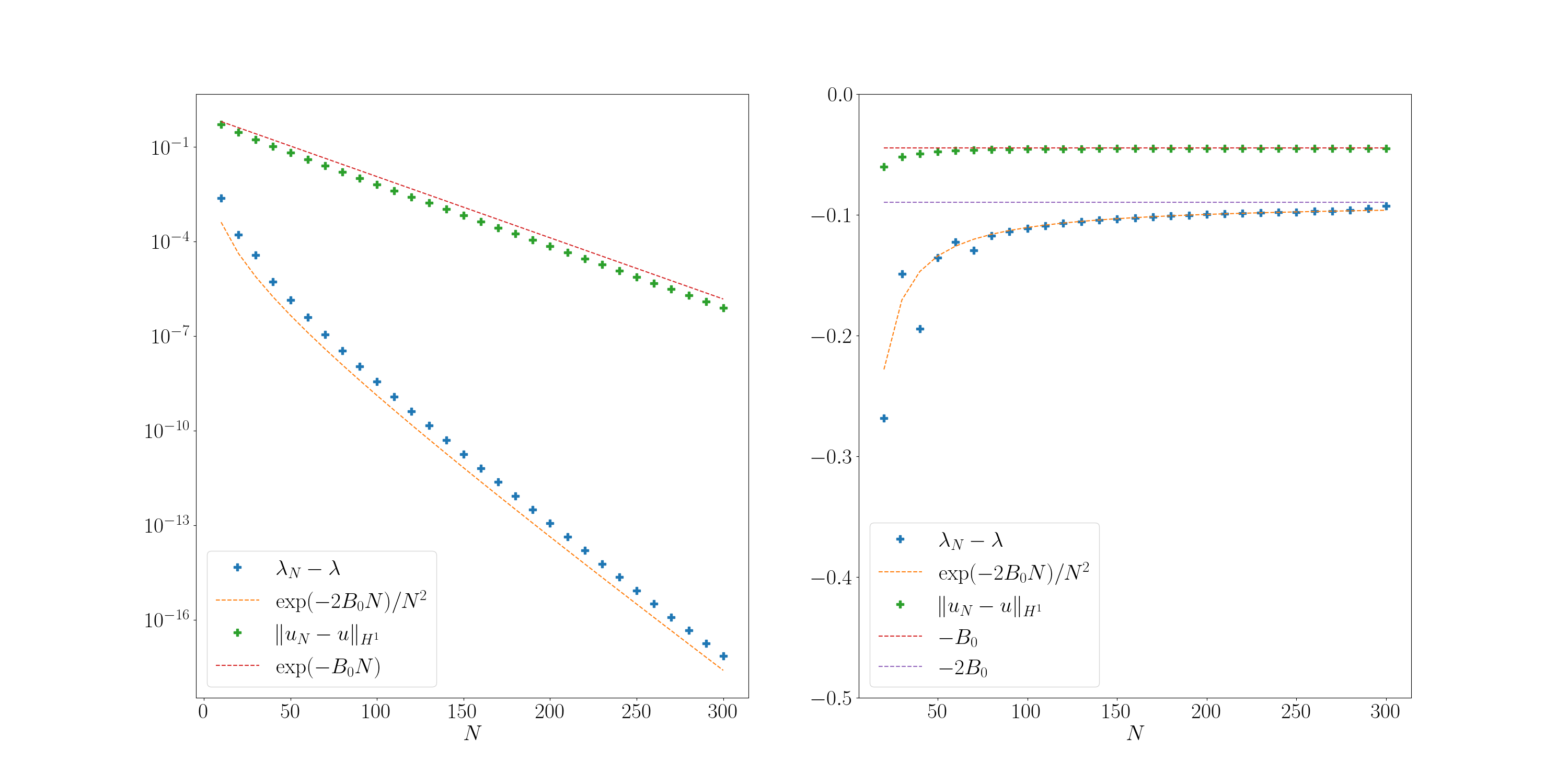}
  \caption{[Linear eigenvalue problem] (Left) Discretization error for the
    first eigenvalue and the associated eigenvector in the $H^1$ norm. Both
    errors seem to converge with rates given by Theorem~\ref{thm:numerics}, the
    prefactors in the $\cosh$ being confirmed by the plot on the right.
    (Right) Logarithm of the ratio between discretization errors for various
    $N$'s.}
  \label{fig:discretization}
\end{figure}

\newpage
\section{The nonlinear case}
\label{sec:nonlin}

In the nonlinear case, the analyticity strip of the solution can be much smaller
than the one of the data.
First, we report numerical simulations on a nonlinear periodic Gross--Pitaevskii
eigenvalue problem with a cubic nonlinearity and a  potential admitting an
entire continuation, illustrating the fact that the eigenfunction belongs to
$\Hc_A$ for some $A>0$, but does not seem to be entire.
Next, we study a nonlinear Schrödinger equation with entire potential and source
term and cubic nonlinearity.
We show that the solution is not entire and we provide an upper bound of
the width of the horizontal analyticity strip of the solution.

\subsection{Analyticity on a strip}

We consider here the nonlinear eigenvalue problem
\begin{equation}\label{eq:GP_eig_anal}
  -\Delta u + Vu + |u|^2u = \lambda u, \quad \norm{u}_{L^2_\per(\Rb,\Cb)}=1,
\end{equation}
where $V\in\Hc_B$ for any $B>0$ is real-valued. Similarly to the linear case, we
consider the planewave variational approximation:
using again $$X_N~=~\mbox{Span}(e_k,~\;~|k|~\lq~N),$$ we obtain the
finite-dimensional problem
\begin{equation}
  \label{eq:var_approx_nl}
  \begin{cases}
    \text{seek } (u_N,\lambda_N) \in X_N \times \Rb \text{ such that }
    \norm{u_N}_{L^2_\per(\Rb,\Cb)}=1 \text{ and }\\
    \displaystyle{\Forall v_N \in X_N,\quad \int_0^{2\pi}\wb{\nabla
        u_N} \cdot \nabla v_N +
      \int_0^{2\pi} V \wb{u_N} v_N
      + \int_0^{2\pi} |u_N|^2\wb{u_N}v_N = \lambda_N \int_0^{2\pi}
      \wb{u_N} v_N.}
  \end{cases}
\end{equation}
Then, the following theorem
holds, which is a simple corollary of known results on the analyticity of
elliptic partial differential equations.
Note that this result only yields the existence of a finite strip of analyticity in the complex
plane. However, even if the data is entire, there is \emph{a priori} no reason for the
solution to be entire too, as shown by the counter-examples that follow.
\begin{theorem}
  \label{thm:anal_nl}
  Let $V$ real-valued be in $\Hc_B$ for any $B>0$ and $(u,\lambda)\in
  H^2_\per(\R,\R)\times\R$
  be the ground-state of \eqref{eq:GP_eig_anal}, uniquely
  defined under the assumption that $u>0$. Then, there exists $A>0$ such that
  $u\in\Hc_A$.
  Moreover, if $(u_N,\lambda_N)$ is the variational approximation of
  $(u,\lambda)$ in $X_N\times\R$, then
  \[
    \Exist C_A>0,\quad \norm{u-u_N}_{H^1_\per} \lq C_A\exp(-AN).
  \]
\end{theorem}

\begin{proof}
  Existence and uniqueness of $u$ is a classical result, proved for instance in
  \cite[Appendix]{cancesNumericalAnalysisNonlinear2010}. In addition, as
  $V\in H^s_\per(\R,\R)$ for any $s>0$, we know from
  \cite[Theorem 2]{cancesNumericalAnalysisNonlinear2010} and by an immediate
  bootstrap argument that $u$ also belongs to $H^s_\per(\R,\R)$ for any $s>0$.
  Therefore, $u\in C^\infty_\per(\R,\R)$.

  \medskip

  Knowing that $u\in C^\infty_\per(\R,\R)$ yields that $u$ is real-analytic on a
  neighborhood of every point $x_0$ of the periodicity cell $[0,2\pi]$,
  according to well-known results on the analyticity of solutions to nonlinear
  elliptic equations, which we recall here for the reader's
  convenience (see for instance \cite[Theorem 1.1]{blattAnalyticitySolutionsNonlinear2020},
    \cite[Theorem 1]{hashimotoRemarkAnalyticitySolutions2006} or
    \cite{friedmanRegularitySolutionsNonLinear1958,morreyAnalyticitySolutionsAnalytic1958,morreyAnalyticitySolutionsAnalytic1958b}).
  \begin{theorem}
    Let $\Omega$ be an open subset of $\R^n$, $D$ an open subset of $\R^n \times
    \R \times \R^n \times \R^{n \times n}_{\rm sym}$, $F : D \ni (x,v,g,M)
    \mapsto F(x,v,g,M) \in \R$ a real analytic function, and $u \in
    C^\infty(\Omega)$ a real-valued function such that for all $x \in \Omega$,
    $(x,u(x),\nabla u(x), \nabla^2 u(x)) \in D$. Assume that $u$ solves
    $F(x,u(x),\nabla u(x), \nabla^2 u(x))=0$ and that this equation is elliptic
    in the sense that
    \[
      \forall (x,\xi) \in \Omega \times \R^n, \; \xi \neq 0, \quad \xi^T \nabla_MF(x,u(x),\nabla u(x),\nabla^2u(x)) \xi \neq 0.
    \]
    Then $u$ is real-analytic in $\Omega$.
  \end{theorem}
  By a compactness argument, we therefore have that $u$ is analytic on a strip of
  size $A>0$ around the real axis, for some $A>0$.

  \medskip

  We now consider the variational approximation $(u_N,\lambda_N)$ of
  $(u,\lambda)$ in $X_N\times \R$. We know from \cite[Theorem
  1]{cancesNumericalAnalysisNonlinear2010} that there exists $C>0$ such that
  \[
    \norm{u-u_N}_{H^1_\per} \lq C \norm{u-\Pi_Nu}_{H^1_\per}.
  \]
  The result then follows similarly to the proof of Theorem~\ref{thm:numerics}.
\end{proof}

\begin{remark}
  In order to establish the convergence rate of the eigenvalues of the ground-state of
  \eqref{eq:GP_eig_anal}, one can follow the proof of \cite[Theorem
  2]{cancesNumericalAnalysisNonlinear2010}.
  Using in particular equations (50) and (53) from this reference, with the negative
  Sobolev norms replaced by the dual norms of $\Hc_A$, one can obtain that there
  exists $C_A>0$ such that $ \abs{\lambda-\lambda_N} \lq C_A\exp(-2AN) $.
\end{remark}

\subsection{Counter-examples}

We analyze in this section two counter-examples of Theorems \ref{thm:lin_bounds} and
\ref{thm:lin_egval} based on the nonlinear
Gross--Pitaevskii equation with entire potentials and source terms, for which the
solutions are not entire. We also present numerical tests to support our
analysis.

\subsubsection{A nonlinear Gross--Pitaevskii eigenvalue problem}
\label{sec:gp}

We study in this section the following nonlinear eigenvalue problem:
\begin{equation}
  \label{eq:GP_eig}
  \begin{cases}
    - \varepsilon \Delta u_\varepsilon + \gamma\cos(x) u_\varepsilon +
    |u_\varepsilon|^2u_\varepsilon = \lambda_\varepsilon u_\varepsilon
    \quad\mbox{in } H^1_\per(\Rb,\Cb), \\
    \norm{u_\varepsilon}_{L^2_{\rm per}} = 1, \quad u_\varepsilon > 0,
  \end{cases}
\end{equation}
with entire data. In view of the results we obtained in the linear
case, we could expect the solution $u_\varepsilon$ to
\eqref{eq:GP_eig} (which we know to exist) to be entire. However, our
numerical experiments suggest on the contrary that $u_\varepsilon$ is
analytic only on a band of finite size.

\medskip

The potential $V$ being real-valued, $u_\varepsilon$ is actually real-valued too
and the singular limit $\varepsilon\to0$ yields the algebraic equation $\gamma
\cos(x)u_0(x) + u_0(x)^3 = \lambda_0 u_0(x)$. If we assume that $u_0$ does not
vanish on the unit cell $[0,2\pi]$, we can divide by $u_0(x)$ and then integrate
over $[0,2\pi]$: using the normalization condition, we obtain $\lambda_0 =
1/2\pi$. This yields
\begin{equation}\label{eq:eg_u0}
  u_0(x) = \sqrt{\frac1{2\pi} - \gamma\cos(x)},
\end{equation}
which is indeed bounded away from zero as soon as $\beta_0 \coloneqq 1 / (2\pi
\gamma) > 1$, which we take in the following. The analytic continuation of $u_0$, still denoted by $u_0$, satisfies
\[
  \gamma \cos(z) u_0(z) + u_0(z)^3 = \frac1{2\pi} u_0(z),
\]
with $\sqrt{\cdot}$ in \eqref{eq:eg_u0} being the continuation of the square root with
branch cut $\R_-$. The maximal horizontal strip of the complex plane on which
$u_0$ is analytic is $\R + \i(-B_0,B_0)$, where $B_0$ is such that $\cos(\pm
\i B_0) = \beta_0$, that is to say $B_0 = \log\prt{\beta_0 +
  \sqrt{\beta_0^2-1}}$. More precisely, the function $z\mapsto u_0(z)$ admits
two branching points at $z_{\pm} = \pm\i B_0$, for which $u_0(z_\pm) = 0$ and
$$
\left| \frac{\d u_0}{\d t}(t z_\pm)\right| =
\left|\frac{\gamma\sin(tz_\pm)}{2\sqrt{\frac1{2\pi}- \gamma\cos(tz_\pm)}} \right|
\to \infty \quad \mbox{when $\R \ni t \to 1_-$},
$$
see Figure~\ref{fig:u0_eg}.

\begin{figure}[h!]
  \includegraphics[width=\linewidth]{./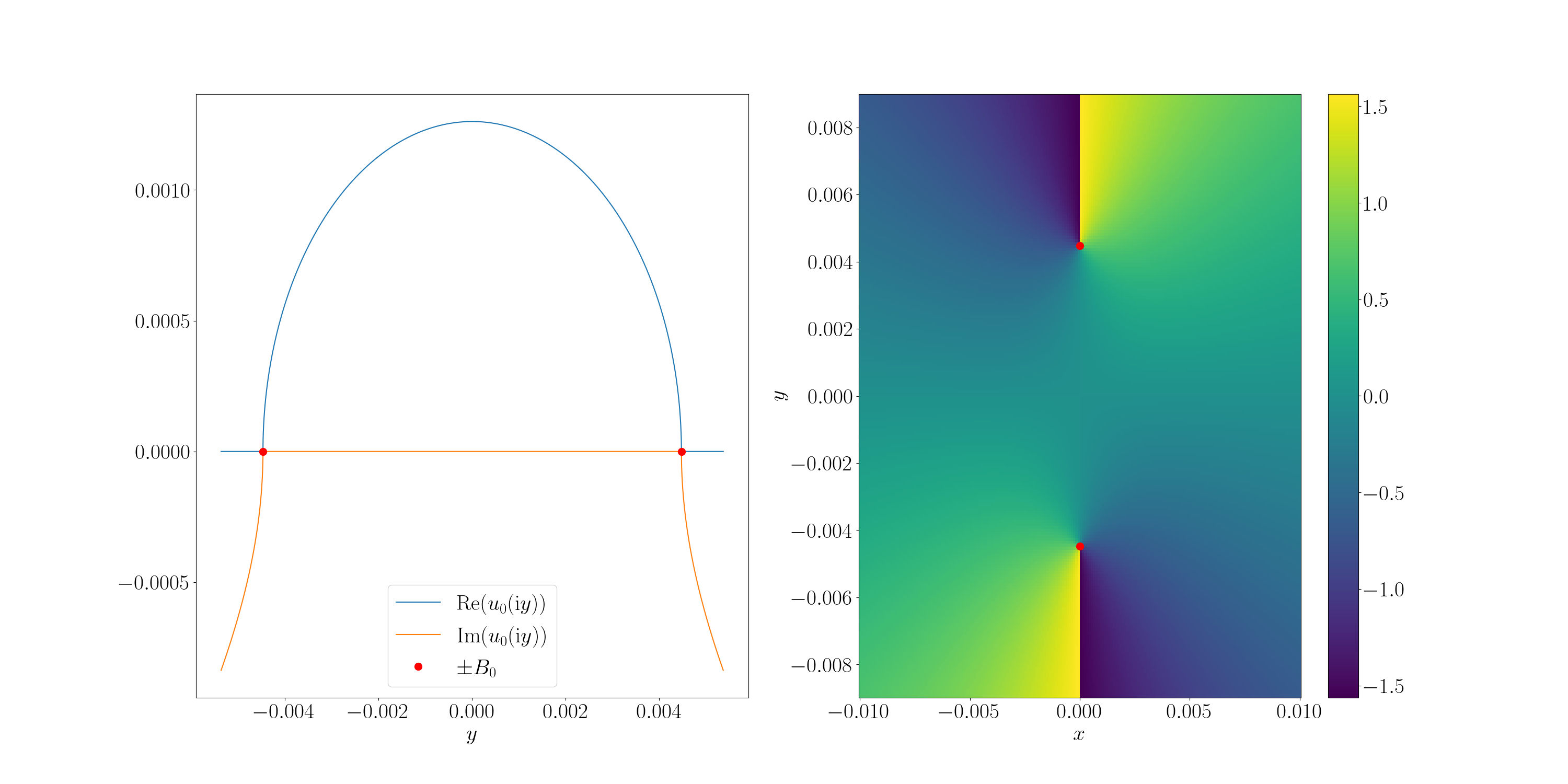}
  \caption{[Nonlinear eigenvalue problem] Analytic continuation of $u_0$ for $\beta_0 = 1.00001$, for which
    $B_0 \approx 0.0045$.
    (Left) Imaginary and real parts of
    $y \mapsto u_0(\i y)$.  A discontinuity of the derivative appears at the expected positions.
    (Right) Phase of $z \mapsto u_0(z)$ where $z=x+\i y$.  Branching points, in red,
    appear at the expected positions.}
  \label{fig:u0_eg}
\end{figure}

\medskip

When $\varepsilon > 0$, we can approximate numerically the solution of
\eqref{eq:GP_eig} using the planewave approximation presented before, with
discretization parameter $\Ecut = 10^6$ and solver tolerance $10^{-13}$.
The Fourier coefficients of the numerical
solution for various $\varepsilon$ are displayed in Figure~\ref{fig:fourier_u_eg}.
Although it is not possible to rule out a transition to a different asymptotic
regime in the limit of extremely small values of $\varepsilon$ or very large values
of $k$ because of finite numerical accuracy,
our numerical results are compatible with the decay of the Fourier
coefficients of $u_\varepsilon$ as $1/(|k|^{3/2}\sqrt{\cosh(2B_0k)})$.
This leads to the conclusion that the solution $u_\varepsilon$ is
analytic on a band strip of size close to $B_0$,
which is in contradiction with the results from the linear case. In
the next section, we study a similar nonlinear elliptic equation where we
reach similar conclusions and where, in addition, we can provide an upper bound
of the width of the horizontal strip on which the solutions are analytic.

\begin{figure}[h!]
  \centering
  \includegraphics[width=\linewidth]{./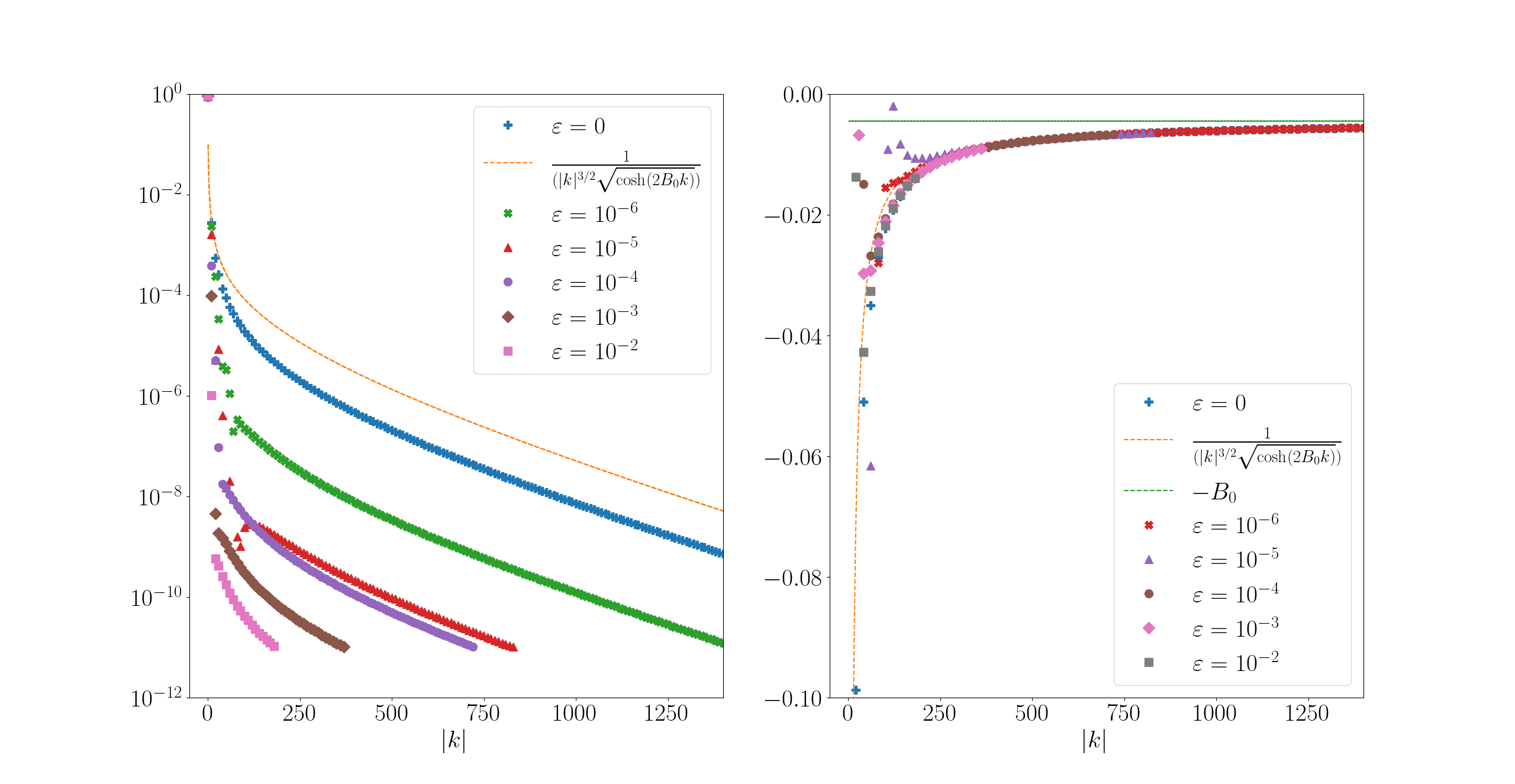}
  \caption{[Nonlinear eigenvalue problem] (Left) Fourier coefficients of (the
    numerical approximation of) $u_\varepsilon$: they seem to decrease
    like $1/(\abs{k}^{\alpha}\sqrt{\cosh(2B_0k)})$ with $\alpha=3/2$, the
    prefactor $2B_0$ in the $\cosh$ being confirmed by the plot on the right. (Right) Logarithm
    of the ratio of two successive nonzero Fourier coefficients.}
  \label{fig:fourier_u_eg}
\end{figure}

\subsubsection{A nonlinear elliptic equation}\label{sec:nonlin_ellip}

Still in the perspective of studying nonlinear elliptic problems with analytic
data, we now consider the nonlinear periodic elliptic equation with cubic
nonlinearity
\begin{equation}
  \label{eq:GP}
  -\varepsilon\Delta u_\varepsilon + u_\varepsilon +
  |u_\varepsilon|^2u_\varepsilon = f \quad
  \mbox{in } H^1_\per(\R,\C),
\end{equation}
where $\varepsilon > 0$, and $f:\R \to \R$ is a real-analytic $2\pi$-periodic
function admitting an entire continuation, still denoted by $f$, to the
complex plane.
We will show that, in this particular case, the same kind of results we showed
for the linear case are not true any more
and we provide an estimation of the width of the horizontal analyticity strip of the
solution, which is finite even though the source term $f$ is entire.

\medskip

The right-hand-side $f$ being real-valued, $u_\varepsilon$ is also real-valued
and the singular limit $\varepsilon = 0$ gives rise to the algebraic equation
$u_0(x) + u_0(x)^3 = f(x)$, which has a unique real solution for each
$x \in \R$. The latter can be computed by Cardano's formula: the discriminant
of the cubic equation is
\[
  R(x) = -\prt{4 + 27f^2(x)} < 0,
\]
so that
\begin{equation}
  \label{eq:cardan}
  u_0(x) = \sqrt[3]{\frac{1}{2}\prt{f(x) + \sqrt{\frac{-R(x)}{27}}}} +
  \sqrt[3]{\frac{1}{2}\prt{f(x) - \sqrt{\frac{-R(x)}{27}}}},
\end{equation}
the other two roots being complex conjugates with nonzero imaginary
parts.
The function $u_{0}$ can be analytically continued from the real axis
upwards in the complex plane as long as $R(z)$ does not touch zero; it
has branching points (with an exploding first derivative) at the
points where $R(z) = 0$. In the rest of this section, we will use the function
$f(x) = \mu \sin(x)$, for a fixed $\mu > 0$. The analytic continuation
of $u_{0}$ has branching points at $2\pi \Z \pm \i B_0$, with
\begin{align*}
  B_{0} = {\rm arcsinh} {\sqrt{\frac 4 {27 \mu^{2}}}}> 0.
\end{align*}
In particular, although the source term $f$ has an entire continuation, the
solution $u_0$ given by \eqref{eq:cardan} does not.

\medskip

When $\varepsilon > 0$, we can approximate numerically the solution to
\eqref{eq:GP} with the planewave approximation introduced
before, with discretization parameter $\Ecut = 10^6$ and solver tolerance
$10^{-13}$. The Fourier coefficients of the numerical
solution are then displayed in Figure~\ref{fig:fourier_u} for various
$\varepsilon$: the plots suggest that increasing $\varepsilon$
increases the width $B_\varepsilon$ of the horizontal analyticity strip of
$u_\varepsilon(z)$, but does not make it entire.

\begin{figure}[h!]
  \includegraphics[width=\linewidth]{./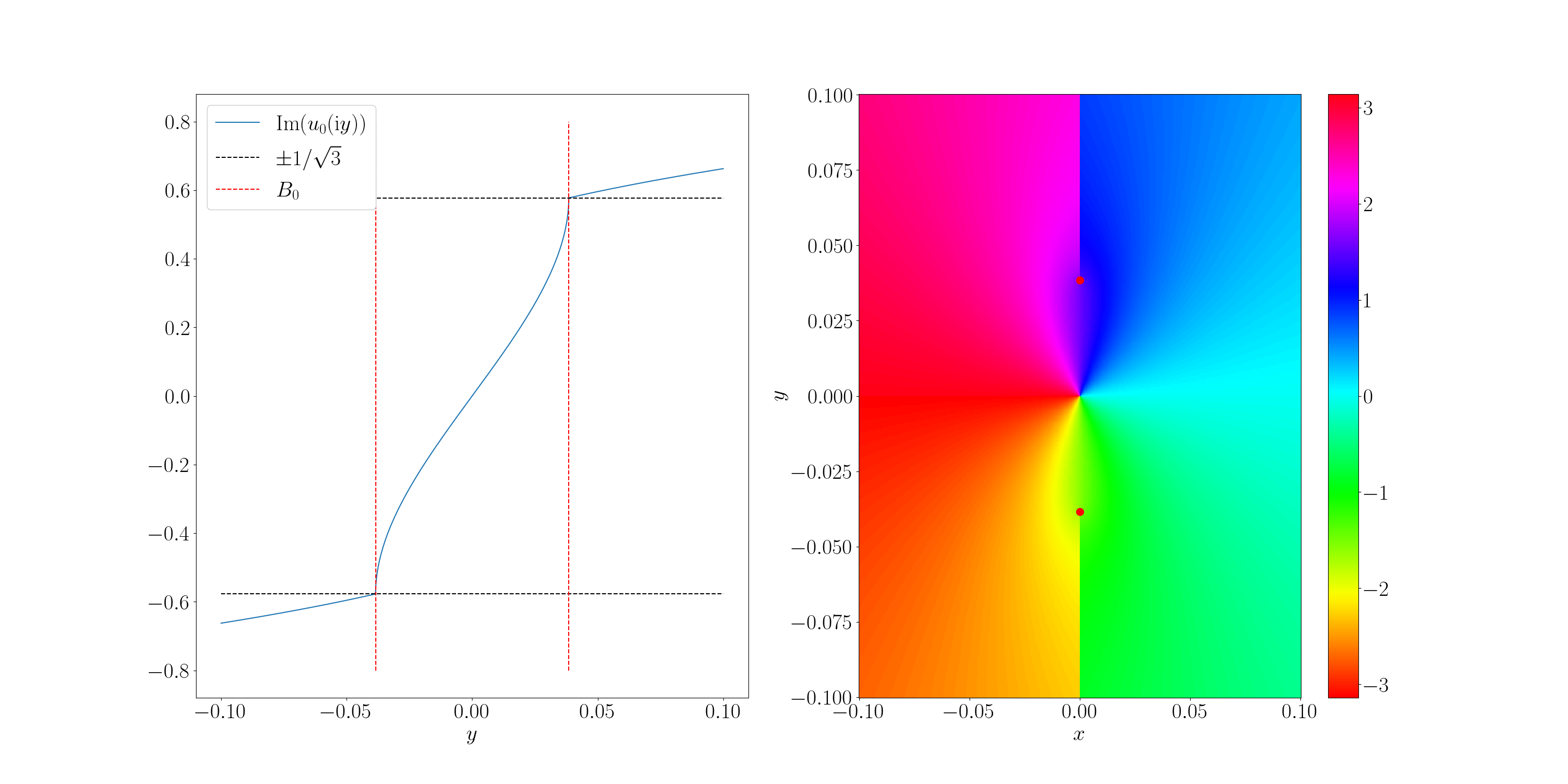}
  \caption{[Nonlinear elliptic equation] Analytic continuation of $u_0$ for $\mu=10$, for which
    $B_0 \approx0.0385$.
    (Left) Imaginary part of
    $y \mapsto u_0(\i y)$.  Discontinuities also appear at the expected
    positions $\pm B_0$. (Right) Phase of $z \mapsto u_0(z)$ where $z=x+\i y$.
    Branching points, in red, appear at the expected positions $\pm \i B_0$.}
  \label{fig:cardan_eps0}
\end{figure}

\begin{figure}[h!]
  \includegraphics[width=\linewidth]{./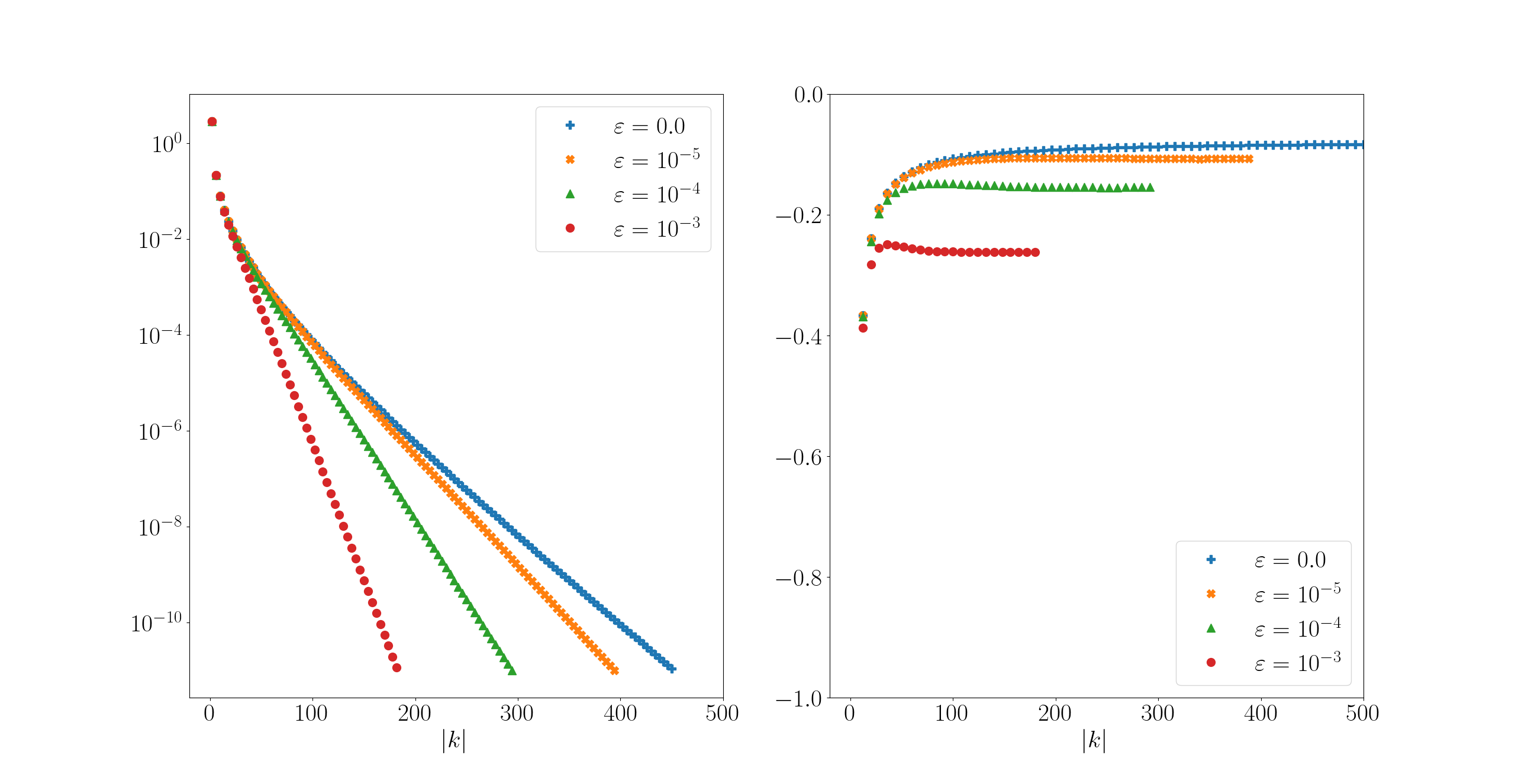}
  \caption{[Nonlinear elliptic equation] (Left) Fourier coefficients of (the
    numerical approximation of) $u_\varepsilon$. (Right) Logarithm
    of the ratio of two successive nonzero Fourier coefficients.}
  \label{fig:fourier_u}
\end{figure}

\medskip

In this particular case, we are able to obtain an upper bound of the value of
$B_\varepsilon$ by an ODE technique. Let
$\varphi_\varepsilon(y) \coloneqq u_\varepsilon(\i y)$ be the value of the
solution along the imaginary axis. Using the
Cauchy--Riemann equations, we see that $\varphi_{\varepsilon}$ satisfies the
following second-order ODE:
\[
  \begin{cases}
    \varepsilon {\varphi}_\varepsilon''(y) + \varphi_\varepsilon(y) +
    \varphi_\varepsilon^3(y) =
    \i \mu\sinh(y),\\
    \varphi_\varepsilon(0) = u_\varepsilon(0) = 0,\quad
    \varphi'_\varepsilon(0) = \i u'_\varepsilon(0),
  \end{cases}
\]
with $u'_\varepsilon(0) \in \R$.
Decomposing $\varphi_\varepsilon$ in its real part $\theta_\varepsilon$ and
imaginary part $\psi_\varepsilon$, we see that $\theta_\varepsilon$ satisfies the
equation
\begin{align*}
  \begin{cases}
    \varepsilon {\theta}''_\varepsilon + \theta_\varepsilon +
    \theta_\varepsilon^3
    -3 \theta_\varepsilon\psi_\varepsilon^2 = 0, \\
    \theta_\varepsilon(0) = 0,\quad \theta'_\varepsilon(0) = 0,
  \end{cases}
\end{align*}
and therefore vanishes. As a consequence, $u_{\varepsilon}$ is purely imaginary along the
imaginary axis (as could have been anticipated by the symmetries
$u_{\varepsilon}(-z)~=~-~u(z)$,
$u_{\varepsilon}(\overline{z}) = \overline{u_{\varepsilon}(z)}$).
We are then left with studying the imaginary part $\psi_{\varepsilon}$,
which satisfies the ODE
\begin{align}
  \begin{cases}
    \varepsilon {\psi}''_\varepsilon + \psi_\varepsilon - \psi_\varepsilon^3 = \mu\sinh,\\
    \psi_\varepsilon(0) = 0,\quad
    \psi'_\varepsilon(0) = u'_\varepsilon(0).
  \end{cases}
  \label{eq:psi}
\end{align}
If we can prove that $\psi_\varepsilon$ becomes non-analytic at a finite
$0 < Y_\varepsilon < \infty$, this will imply that the width
$B_\varepsilon$ of the horizontal analyticity strip of $u_\varepsilon$
satisfies $B_\varepsilon \lq  Y_\varepsilon < \infty$ and is therefore finite
even though the data in \eqref{eq:GP} is entire.
To this end, we prove in the Appendix B the following lemma, which gives a
sufficient condition for the non-analyticity of $\psi_\varepsilon$ in finite
time.
\begin{lemma}\label{lem:comp}
  Let $Y_\varepsilon\in\R_+\cup\{\infty\}$ be the maximum time of definition of $\psi_\varepsilon$.
  If, for some $\eta>0$ small enough, there exists $y_\eta\gq B_0$ such that
  $\psi_\varepsilon(y_\eta) = 1+\eta > 1$ and
  $\psi_\varepsilon'(y_\eta) > 0$, then $Y_\varepsilon < \infty$ and
  $\psi_\varepsilon(y)\to\infty$ when $y\to Y_\varepsilon$.
\end{lemma}

To prove that the sufficient condition from Lemma~\ref{lem:comp} is satisfied,
we introduce the set
\begin{equation*}
  X_\mu = \set{
    (y,\upsilon) \in \Rb^2,\ \mu\sinh(y) - \upsilon + \upsilon^3 \gq 0
  }.
\end{equation*}
This set is such that, if $(y,\psi_\varepsilon(y))$ lies strictly inside
$X_\mu$, then
$\psi_\varepsilon$ is locally strictly convex. For $y < B_0$,
$\psi_\varepsilon$ might oscillate on both sides of the boundary of $X_\mu$
\cite{vrabelDuffingTypeOscillatorBounded2013}. We
investigated numerically the behavior of this function for the set of
parameters used in Figure~\ref{fig:Xmu} ($\varepsilon = 0.1$, $\mu = 0.5$), and
observed that $0 < \psi_\varepsilon(B_0) < \frac 1{\sqrt 3}$ and
$\psi'_\varepsilon(B_0) > 0$.  This numerical observation can be trusted as
the ODE satisfied by $\psi'_\varepsilon$ on the interval $[0,B_0]$ for
$\varepsilon = 0.1$ and $\mu = 0.5$ is not stiff. It is therefore easy to
solve it numerically with high accuracy with {\it a posteriori} error
estimates guaranteeing that $\psi_\varepsilon(B_0)$ is indeed strictly between
$0$ and $\frac 1{\sqrt 3}$, and  $\psi'_\varepsilon(B_0)$ is positive.
Therefore, $\psi_\varepsilon$ is locally strictly convex at $y=B_0$. Given the shape of
$X_\mu$ (see Figure~\ref{fig:Xmu} in appendix), $\psi_\varepsilon$ lies, for any
$B_0<y<Y_\varepsilon$,
above its tangent at $y=B_0$, whose slope is $\psi'_\varepsilon(B_0)>0$.
Strict convexity allows one to conclude that
for any $\eta>0$, there exists $y_\eta > B_0$ such that
\[
  \begin{cases}
    \psi_\varepsilon(y_\eta)=1+\eta>1, \\
    \psi_\varepsilon'(y_\eta) > \psi_\varepsilon'(B_0) > 0.
  \end{cases}
\]
Any $\eta>0$ is therefore suitable to apply
Lemma~\ref{lem:comp} and conclude that $\psi_\varepsilon$ blows up in finite time
$Y_\varepsilon$. We can deduce from these investigations that $u_\varepsilon$
is analytic only on a horizontal strip of finite width of the complex plane although the source term
$f$ is an entire function: our results in the linear case are therefore no
longer valid in general in the nonlinear case.

\section*{Conflict of interest}
All the authors declare that they have no conflicts of interest.

\section*{Data availability}
All the scripts used to generate the plots of Figures \ref{fig:fourier_linear},
\ref{fig:discretization}, \ref{fig:u0_eg},
\ref{fig:fourier_u_eg}, \ref{fig:cardan_eps0} and \ref{fig:fourier_u}  are
available online at
\begin{center}
  \url{https://github.com/gkemlin/analytic\_potentials}.
\end{center}

\section*{Acknowledgements}
The authors would like to thank Geneviève Dusson and Michael F. Herbst for
fruitful discussions. This project has received funding
from the European Research Council (ERC) under the European Union’s Horizon
2020 research and innovation programme (grant agreement No 810367).
The authors would also like to thank the anonymous reviewers for their comments
and suggestions.

\appendix

\section{Extension to the multidimensional case with application to Kohn--Sham
  models.}
\label{sec:multi_d}

The goal of this section is to extend the previous results to the
multidimensional case and apply them to the linear version of the Kohn--Sham
equations \eqref{eq:KS-DFT}.
To this end, consider a Bravais lattice $\LL=\Z \bm a_1+ \cdots + \Z \bm a_d$ where
$\bm a_1,\dots, \bm a_d$ are linearly independent vectors of $\R^d$
($d=3$ for KS-DFT). Up to an affine change of variables (which preserves
analyticity), we can take without loss of generality $\LL = 2\pi \Z^{d}$.
We denote
by $\Omega=[0,2\pi)^{d}$ a unit cell, by $\LL^* = \Z^{d}$ the
reciprocal lattice, by $e_{\bm G}(\bm x)=|\Omega|^{-1/2}\e^{\i\bm G \cdot \bm x}$ the
Fourier mode  with wavevector $\bm G \in \LL^*$, and by
$$
H^s_{\per,\LL} \coloneqq \set{ u= \sum_{\bm G \in \LL^*} \widehat u_{\bm G}
  e_{\bm G}
  \; \middle| \; \sum_{\bm G \in \LL^*} (1+|\bm
  G|^2)^s |\widehat u_{\bm G}|^2 < \infty }
$$
the $\LL$-periodic Sobolev spaces endowed with their usual inner products.
All the arguments in Sections~\ref{sec:lin_pb}-\ref{sec:numeric} can be
extended to the multidimensional case by introducing the Hilbert spaces
\[
  \Hc_{A,\LL} \coloneqq \set{u \in L^2_{\per,\LL} \;\middle|\;
    \sum_{\bm{G}\in \LL}  {w}_{A,\LL} (\bm{G}) \abs{\wh{u}_{\bm{G}}}^2 <
    \infty},\quad
  (u,v)_{A,\LL} \coloneqq \sum_{\bm{G}\in\LL^*}  {w}_{A,\LL}(\bm{G})
  \overline{\wh{u}_{\bm{G}}}  \wh{v}_{\bm{G}},
\]
where
$\displaystyle{{w}_{A,\LL}(\bm{G}) = \sum_{n=1}^d w_A(G_{n})}$.
Each $u \in \Hc_{A,\LL}$ can be extended to an analytic
function $u(z_1,\dots,z_d)$ of $d$ complex variables defined on a
neighborhood on $\R^d$, and it holds
\[
  \sum_{\bm{G}\in\LL^*}  w_{A,\LL}(\bm{G})  \abs{\wh{u}_{\bm{G}}}^2 =
  \frac{1}{2} \sum_{j=1}^d \int_{\Omega} \abs{{u}(\bm{x}+\i e_{j})}^2 +
  \abs{{u}(\bm{x}-\i  e_{j})}^2 \d \bm{x}.
\]
with $e_{1},\dots, e_d$ the canonical basis vectors.
The approximation space $X_{N,\LL}$ is then defined as
$$
X_{N,\LL} \coloneqq \mbox{Span} (e_{\bm G}, \; \bm G \in \LL^*, \; |{\bm G}| \lq N ),
$$
and the inverse $T_{22,\LL}^{-1}$ of the restriction $T_{22,\LL}$ of the
operator $-\Delta$ on $L^2_{\per,\LL}$ to the invariant subspace
$X_{N,\LL}^\perp = \mbox{Span} (e_{\bm G}, \; \bm G \in \LL^*, \; |{\bm G}| > N )$ satisfies
$$
\|T^{-1}_{22,\LL} \|_{\Lc(X_{N,\LL}^\perp)} = \|T^{-1}_{22,\LL} \|_{\Lc(\Hc_{A,\LL}
  \cap X_{N,\LL}^\perp)} \lq N^{-2}.
$$
The proofs of Theorem~\ref{thm:lin_bounds}, Theorem~\ref{thm:lin_egval} and
Theorem~\ref{thm:numerics} can thus be straightforwardly adapted to the
multidimensional case.

\medskip

Lastly, if $V \in \Hc_{B,\LL}$ for some $B > 0$, the Schr\"odinger operator
$H = -\Delta + V$ {\em considered this time as a Schr\"odinger operator on
  $L^2(\R^d,\C)$ with an $\LL$-periodic potential}, can be decomposed by the
Bloch transform \cite[Section XIII.16]{reedAnalysisOperators1978}
and its Bloch fibers are the self-adjoint operators on
$L^2_{\per,\LL}$ with domain $H^2_{\per,\LL}$ and form domain $H^1_{\per,\LL}$
defined as
$H_{\bm k} = (-\i\nabla + \bm k)^2 + V$. The following result is concerned with
the Bloch eigenmodes of the $H_{{\bm k}}$'s.
\begin{theorem} Let $B > 0$ and $V \in \Hc_{B,\LL}$. For each
  $\bm k \in \R^d$, the eigenfunctions of the Bloch fibers
  $H_{\bm k} = (-\i\nabla +\bm k)^2 + V$ of the periodic Schr\"odinger operator
  $H=-\Delta + V$ are in $\Hc_{A,\LL}$ for any $0<A<B$. Let
  $\lambda_{1,\bm k} \lq \lambda_{2,\bm k} \lq \cdots$ be the eigenvalues of
  $H_{\bm k}$ counted with multiplicities and ranked in non-decreasing order,
  and
  $\lambda_{1,\bm k,N} \lq \lambda_{2,\bm k,N} \lq \cdots  \lq
  \lambda_{d_{\LL,N},\bm k,N}$ the eigenvalues of the variational approximation
  of $H_{\bm k}$ in the $d_{\LL,N}$-dimensional space
  $$
  X_{\LL,\bm k, N}\coloneqq\mbox{\rm Span}(e_{\bm G}, \; \bm G \in \LL^*, \;
  |\bm G + \bm k| \lq N).
  $$
  Then, for each $0 < A < B$ and $i \in \N^*$, there exists a constant
  $C_{i,A} \in \R_+$ such that
  \begin{equation}\label{eq:bound_BZ}
    0 \lq \max_{\bm k \in \Omega^*} (\lambda_{i,\bm k,N} - \lambda_{i,\bm k})
    \lq C_{i,A}\e^{-2AN},
  \end{equation}
  where $\Omega^*$ is the first Brillouin zone (i.e. the Voronoi cell of the
  lattice $\LL$ of $\R^d$ containing the origin).
\end{theorem}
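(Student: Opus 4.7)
The plan is to mirror the one-dimensional arguments of Theorem~\ref{thm:lin_egval} and Theorem~\ref{thm:numerics}, cast in the multidimensional framework already set up in this section, but with the kinetic operator $-\Delta$ replaced by the Bloch-fiber kinetic operator $(-\i\nabla + \bm k)^2$. The compactness of the Brillouin zone $\Omega^*$ is then used to make all estimates uniform in $\bm k$.

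First step (analyticity of the Bloch eigenfunctions): for a fixed $\bm k \in \Omega^*$ and $0 < A < B$, decompose a normalized eigenfunction $u_{n,\bm k}$ of $H_{\bm k}$ as $u_{n,\bm k} = u_1 + u_2$ with $u_1 \in X_{\LL,\bm k,N}$ and $u_2 \in X_{\LL,\bm k,N}^\perp$. On $X_{\LL,\bm k,N}^\perp$, the operator $(-\i\nabla + \bm k)^2$ is diagonal in the Fourier basis with eigenvalues $|\bm G + \bm k|^2 > N^2$, so its restriction is bounded below by $N^2$ with inverse of norm at most $N^{-2}$ in $\Lc(\Hc_{A,\LL} \cap X_{\LL,\bm k,N}^\perp)$. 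Projecting the eigenvalue equation onto the high-frequency subspace and choosing $N > \sqrt{\|V\|_{\Lc(\Hc_{A,\LL})} + |\lambda_{n,\bm k}| + 1}$ allows the exact same Neumann-series argument as in Theorem~\ref{thm:lin_egval} to close the contraction estimate, giving $u_{n,\bm k} \in \Hc_{A,\LL}$ together with an explicit bound on $\|u_{n,\bm k}\|_{A,\LL}$ in terms of $\|V\|_{\Lc(\Hc_{A,\LL})}$ and $|\lambda_{n,\bm k}|$. By standard perturbation theory, $\bm k \mapsto \lambda_{n,\bm k}$ is continuous on the compact set $\Omega^*$, so the choice of $N$ can be made uniform in $\bm k$, yielding the uniform bound $\sup_{\bm k \in \Omega^*} \|u_{n,\bm k}\|_{A,\LL} < \infty$.

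Second step (uniform exponential convergence): for each fixed $\bm k \in \Omega^*$, the planewave discretization of $H_{\bm k}$ on $X_{\LL,\bm k,N}$ fits the classical Galerkin framework for bounded-below self-adjoint operators with compact resolvent. Picking the intermediate parameter $A' = (A+B)/2$, the multidimensional analogue of Theorem~\ref{thm:numerics} gives
\[
0 \lq \lambda_{n,\bm k,N} - \lambda_{n,\bm k} \lq c_{n,\bm k} \, \e^{-2AN},
\]
where $c_{n,\bm k}$ depends on $\|u_{n,\bm k}\|_{A',\LL}$, on $\|V\|_{\Lc(\Hc_{A,\LL})}$, and on the distance separating $\lambda_{n,\bm k}$ from the rest of $\sigma(H_{\bm k})$. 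The first two quantities are controlled uniformly in $\bm k$ by step one and by (the multidimensional extension of) \autoref{prop:Vbounded}.

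The main obstacle is the third quantity: individual Bloch bands $\bm k \mapsto \lambda_{m,\bm k}$ can cross as $\bm k$ varies over $\Omega^*$, so the isolation gap around a single $\lambda_{n,\bm k}$ may pinch off at isolated points. I would circumvent this by applying the Babu\v{s}ka--Osborn estimates \cite{babuskaEigenvalueProblems1991} to eigenvalue \emph{clusters} rather than to individual eigenvalues: continuity of all Bloch bands on the compact set $\Omega^*$ together with the fact that only finitely many bands can enter any fixed bounded energy window ensures that the cluster containing $\lambda_{n,\bm k}$ is uniformly separated from the remainder of $\sigma(H_{\bm k})$ for all $\bm k \in \Omega^*$. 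Taking the supremum of the resulting exponential estimate over $\bm k \in \Omega^*$ then delivers~\eqref{eq:bound_BZ}.
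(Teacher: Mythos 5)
Your proposal takes essentially the same route as the paper: replace $X_N$ by $X_{\LL,\bm k,N}$, replace $T_{22}$ by the restriction of $(-\i\nabla+\bm k)^2$ to $X_{\LL,\bm k,N}^\perp$ (bounded below by $N^2$ on that subspace), and then rerun the Neumann-series argument of \autoref{thm:lin_egval} and the Babu\v{s}ka--Osborn machinery of \autoref{thm:numerics}. The paper's own proof is, in fact, exactly this one-line reduction and nothing more. What you add on top — making the choice of $N$ and the bound on $\|u_{n,\bm k}\|_{A',\LL}$ uniform over $\bm k\in\Omega^*$ via continuity of the band functions, and handling possible band crossings by passing from single-eigenvalue to cluster estimates in \cite{babuskaEigenvalueProblems1991} — is a genuine and worthwhile strengthening: the statement asserts a $\bm k$-uniform constant $C$, and the paper's proof leaves that uniformity entirely implicit, whereas you identify precisely where a single-band gap could pinch and how to circumvent it. So: same method, but you close a step the paper glosses over.
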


\begin{proof} It suffices to replace in the proofs of Theorem~\ref{thm:lin_egval}
  and Theorem~\ref{thm:numerics} $X_N$ with $X_{\LL,\bm k, N}$
  and $T_{22}$ with the restriction $T_{22,\LL,\bm k}$ of the
  operator $(-\i\nabla + \bm k)^2$ to the invariant space
  $X_{\LL,\bm k, N}^\perp$. The latter is invertible and such that
  $\| T_{22,\LL,\bm k}^{-1} \|_{\Lc(X_{\LL,\bm k, N}^\perp)} \lq N^{-2}$ and
  $\| T_{22,\LL,\bm k}^{-1} \|_{\Lc(\Hc_{A,\LL} \cap X_{\LL,\bm k, N}^\perp)} \lq N^{-2}$.
  The result then follows similarly.
\end{proof}

\begin{remark}
  As a conclusion, let us mention that, under appropriate assumptions, we can
  extend these results to the KS-DFT equations \eqref{eq:KS-DFT} with GTH
  pseudopotentials in the case where an analytic parametrization of the
  exchange-correlation
  functional is used. This is relevant for instance when studying
  condensed-phase systems, for which the periodic setting is well suited, and
  where the commonly used approximations of $V_{{\rm xc},\rho}$ are analytic on
  the positive density values that are of interest in such cases
  \cite{perdewAccurateSimpleAnalytic1992}.
  Then we can rewrite \eqref{eq:KS-DFT} as a system of elliptic PDEs:
  \[
    \begin{cases}
      \displaystyle\prt{- \frac 1 2 \Delta + V_{\rm nl} + V_{\rm loc} + V_{{\rm H},\rho}
        + V_{{\rm xc},\rho}} \varphi_i - \lambda_i \varphi_i = 0, \quad
      (\varphi_i,\varphi_{j})_{L^2_{\per,\LL}} = \delta_{ij}, \\
      \displaystyle\rho(\bm x) = \sum_{i=1}^{N_{\rm p}} |\varphi_i(\bm x)|^2, \\
      \displaystyle- \Delta V_{{\rm H},\rho}(\bm x) = 4\pi \left( \rho(\bm x) -
        \frac{1}{|\Omega|}\int_\Omega \rho \right), \quad \int_\Omega V_{{\rm
          H},\rho} = 0,
    \end{cases}
  \]
  We can then use known results for elliptic systems of PDEs
  \cite{morreyAnalyticitySolutionsAnalytic1958,morreyAnalyticitySolutionsAnalytic1958b}
  to conclude, in a way similar to what we proved in Theorem~\ref{thm:anal_nl},
  that the orbitals $\varphi_i$ belong to $\Hc_{A,\LL}$ for some $A>0$. In particular, this
  leads to the exponential convergence of planewave approximations, justifying
  the use of GTH pseudopotentials.
\end{remark}

\section{Proof of Lemma~\ref{lem:comp}}

We start by rewriting \eqref{eq:psi} as a first-order ODE on
$\Psi_\varepsilon(y) \coloneqq
\begin{bmatrix}
  \psi_\varepsilon(y) \\ \psi'_\varepsilon(y)
\end{bmatrix} \in \R^2$, starting at $y_\eta\gq B_0$:
\begin{equation}
  \label{eq:ODE}
  \Psi'_\varepsilon(y) =
  \begin{bmatrix}
    \Psi_{\varepsilon,2}(y) \\ \varepsilon^{-1} \prt{\mu\sinh(y) -
      \Psi_{\varepsilon,1}(y) + \Psi_{\varepsilon,1}^3(y)}
  \end{bmatrix}
  ,\qquad \Psi_\varepsilon(y_\eta) \coloneqq
  \begin{bmatrix}
    \psi_\varepsilon(y_\eta) \\ \psi'_\varepsilon(y_\eta)
  \end{bmatrix}=
  \begin{bmatrix}
    1+\eta \\ \psi'_\varepsilon(y_\eta)
  \end{bmatrix}.
\end{equation}
We then use the following simplified version of more general comparison results on
systems of differential inequalities
\cite{walterOrdinaryDifferentialEquations1998,wazewskiSystemesEquationsInegualites1950}.
In the sequel, the inequality $a \gq b$ for two vectors $a,b \in \R^d$ means
that $a_i \gq b_i$ for all $1 \lq i \lq d$.
\begin{theorem} (See e.g. \cite[p. 112]{walterOrdinaryDifferentialEquations1998}).
  \label{thm:comparison} Let $d\gq1$ and $G : \Rb^d \to \Rb^d$ be
  locally Lipschitz and quasimonotone in the sense that for all $X,Z \in \R^d$,
  $$
  \left(Z_i=X_i \mbox{ and } X_{j} \lq Y_{j} \mbox{ for $j\ne i$} \right) \quad
  \Rightarrow \quad \left( G(X) \lq G(Z)\right).
  $$
  Let $0 \lq y_0 < y_{\rm M} \lq \infty$ and
  $\Phi \in C^1 ([y_0,y_{\rm M}),\R^d)$ and
  $\Psi \in C^1 ([y_0,y_{\rm M}),\R^d)$ satisfying respectively the ODE
  \[
    \Phi'(y) = G(\Phi(y)),\qquad \Phi(y_0) \in\Rb^d,
  \]
  and the differential inequality
  \[
    \Psi'(y) \gq G(\Psi(y)),\qquad \Psi(y_0) = \Phi(y_0).
  \]
  Then we have
  \[
    \Psi(y) \gq \Phi(y).
  \]
  on $[y_0,y_{\rm M})$.
\end{theorem}

To apply this result to \eqref{eq:ODE}, we introduce the function
$G_\varepsilon : \R^2 \to \R^2$ defined by
$$
\Forall X = \begin{bmatrix}
  X_1 \\ X_2
\end{bmatrix}  \in \R^2, \quad G_\varepsilon(X) = \begin{bmatrix}
  X_2 \\ \varepsilon^{-1}(-X_1+X_1^3)
\end{bmatrix},
$$
and
the maximal solution $\Phi_\varepsilon$  to
\begin{equation}\label{eq:comp}
  \Phi'_\varepsilon(y) =
  G_\varepsilon( \Phi_\varepsilon(y)), \qquad
  \Phi_\varepsilon(y_\eta) =
  \begin{bmatrix}
    1+\eta \\ \psi_\varepsilon'(y_\eta)
  \end{bmatrix}.
\end{equation}
As $\sinh(y) \gq 0$ for all $y \gq 0$, we have the following differential
inequality:
\[
  \Psi'_\varepsilon(y) \gq G_\varepsilon(\Psi_\varepsilon(y)),
  \qquad \Psi_\varepsilon(y_\eta) = \Phi_\varepsilon(y_\eta).
\]
Note that, by convexity, $\psi_\varepsilon(y) \gq \psi_\varepsilon(y_\eta) > 1$
for $y\gq y_\eta$:
$G_\varepsilon$ is indeed quasimonotone on the domain of interest. We are now
ready to compute an upper bound of $Y_\varepsilon$ with the use of
Theorem~\ref{thm:comparison}, which yields
\[
  \Forall y \gq y_\eta,\ \Psi_\varepsilon(y) \gq \Phi_\varepsilon(y).
\]
This leads us to the study of the ODE
\[
  \begin{cases}
    \varepsilon \phi''_\varepsilon = -\phi_\varepsilon + \phi_\varepsilon^3 \\
    \phi_\varepsilon(y_\eta) = 1+\eta,\ \phi'_\varepsilon(y_\eta) =
    \psi'_\varepsilon(y_\eta)>0.
  \end{cases}
\]
We have
\[
  \frac{\varepsilon}{2}\frac{\d}{\d y} \left( \phi'_\varepsilon \right)^2 =
  \frac{\d}{\d y}\prt{-\frac{1}{2} \phi_\varepsilon^2 + \frac{1}{4}
    \phi_\varepsilon^4},
\]
from which we deduce
\[
  \frac{\varepsilon}{2}(\phi'_\varepsilon)^2 = \frac{1}{4}\phi_\varepsilon^4 -
  \frac{1}{2}\phi_\varepsilon^2
  + C(\eta),
\]
with
\[\begin{split}
    C(\eta) &= -\frac{1}{4}(1+\eta)^4 +\frac{1}{2}(1+\eta)^2 +
    \frac{\varepsilon}{2}(\psi'_\varepsilon(y_\eta))^2\\
    &\gq -\frac{1}{4}(1+\eta)^4 +\frac{1}{2}(1+\eta)^2 +
    \frac{\varepsilon}{2}(\psi'_\varepsilon(y_0))^2,\\
  \end{split}
\]
where $C(\eta)$ is computed from the initial conditions at $y=y_\eta$ and $B_0 < y_0 < y_\eta$
is such that $\psi_\varepsilon(y_0)=1$.
Note that
$\eta\mapsto -\frac{1}{4}(1+\eta)^4 +\frac{1}{2}(1+\eta)^2 + \frac{\varepsilon}{2}(\psi'_\varepsilon(y_0))^2$
is decreasing on $\Rb_*^+$ and takes the value
$\frac{1}{4} + \frac{\varepsilon}{2}(\psi'_\varepsilon(y_0))^2 > \frac{1}{4}$
at $\eta = 0$. Thus, for $\eta>0$ small enough, $C(\eta) \gq \frac{1}{4}$ and
we have
\[
  \begin{split}
    \frac{\varepsilon}{2}(\phi_\varepsilon')^2
    &\gq \frac{1}{4}\phi_\varepsilon^4 - \frac{1}{2}\phi_\varepsilon^2 +
    \frac{1}{4} = \frac{1}{4}\prt{\phi_\varepsilon^2-1}^2,
  \end{split}
\]
hence
\[
  \phi_\varepsilon' \gq \frac{1}{\sqrt{2\varepsilon}}\prt{\phi_\varepsilon^2-1} \quad \mbox{on } [y_\eta,Y_\varepsilon).
\]
Finally, we consider the ODE
\[
  \xi'_{\varepsilon,\eta} =
  \frac{1}{\sqrt{2\varepsilon}}(\xi_{\varepsilon,\eta}^2 -1),\qquad
  \xi_{\varepsilon,\eta}(y_\eta) = 1+\eta,
\]
whose solution is
\[
  \xi_{\varepsilon,\eta}(y) = \frac{1+\frac{2}{\eta} +
    \exp\prt{\frac{y-y_\eta}{\sqrt{\varepsilon/2}}}}
  {1+\frac{2}{\eta} - \exp\prt{\frac{y-y_\eta}{\sqrt{\varepsilon/2}}}},
\]
which is defined only up to
$Y_{\varepsilon,\eta} \coloneqq \sqrt{\frac \varepsilon 2}\log\prt{1 + \frac{2}{\eta}} + y_\eta$
. Applying again Theorem~\ref{thm:comparison}, we have that
$\phi_\varepsilon(y) \gq \xi_{\varepsilon,\eta}(y)$ for any $y \gq y_\eta$
such that
both functions are still finite.
Putting everything together, we obtain that
$\psi_\varepsilon$ is only defined up to some $Y_\varepsilon$ with
$B_0 < Y_\varepsilon\lq Y_{\varepsilon,\eta}<\infty$ and that
\begin{equation}\label{eq:final_bound}
  \Forall y\in[y_\eta,Y_\varepsilon),\quad
  \psi_\varepsilon(y)\gq\xi_{\varepsilon,\eta}(y).
\end{equation}
These results are illustrated on Figure~\ref{fig:Xmu}, where we plotted the lower
bound $\xi_{\varepsilon,\eta}$ for $\varepsilon=0.1$ and $\eta=0.5$ as well as
a numerical approximation of $\psi_\varepsilon$.

\begin{figure}[h!]
  \centering
  \includegraphics[width=0.9\linewidth]{./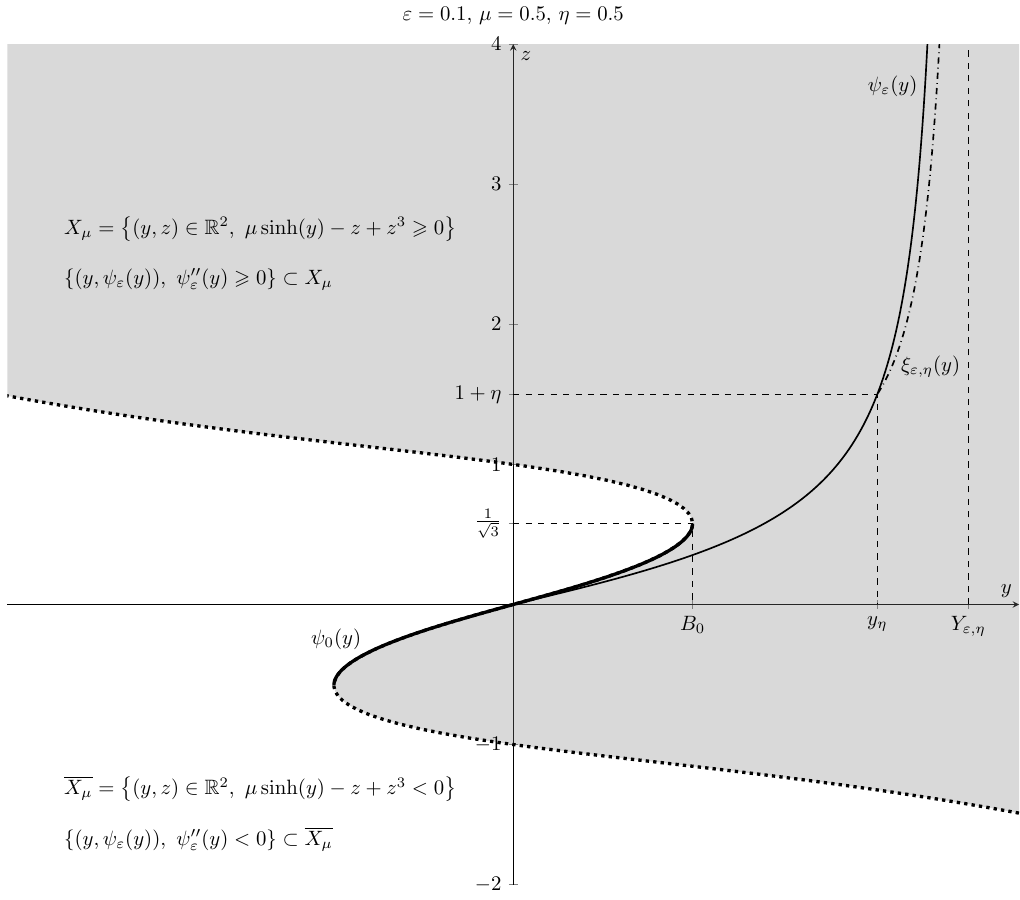}
  \caption{Description of $X_\mu$ for $\mu=0.5$, along with the plot of
    $\psi_\varepsilon$ and the lower bound $\xi_{\varepsilon,\eta}$ for
    $\varepsilon=0.1$, $\eta=0.5$. While $y < B_0$, $\psi_\varepsilon$ can
    possibly oscillate around $\psi_0$, but as soon as $y\gq B_0$,
    $\psi_\varepsilon$ is strictly convex and has no other choice than to explode in
    finite time $Y_\varepsilon \lq Y_{\varepsilon,\eta}$, where
    $Y_{\varepsilon,\eta}$ is the explosion time of the lower bound
    $\xi_{\varepsilon,\eta}$.  }
  \label{fig:Xmu}
\end{figure}

\bibliography{./biblio}

\begin{thebibliography}{10}

\bibitem{babuskaEigenvalueProblems1991}
I.~Babu{\v s}ka and J.~Osborn.
\newblock Eigenvalue problems.
\newblock In {\em Handbook of {{Numerical Analysis}}}, volume~2 of {\em Finite {{Element Methods}} ({{Part}} 1)}, pages 641--787. {Elsevier}, 1991.

\bibitem{benderAdvancedMathematicalMethods1999}
C.~M. Bender and S.~A. Orszag.
\newblock {\em Advanced Mathematical Methods for Scientists and Engineers. 1: {{Asymptotic}} Methods and Perturbation Theory}.
\newblock {Springer}, {New York Heidelberg}, 1999.

\bibitem{bernsteinNatureAnalytiqueSolutions1904}
S.~Bernstein.
\newblock {Sur la nature analytique des solutions des \'equations aux d\'eriv\'ees partielles du second ordre}.
\newblock {\em Mathematische Annalen}, 59(1-2):20--76, 1904.

\bibitem{blattAnalyticitySolutionsNonlinear2020}
S.~Blatt.
\newblock On the analyticity of solutions to non-linear elliptic partial differential systems.
\newblock {\em arXiv:2009.08762 [math.AP]}, 2020.

\bibitem{blochlProjectorAugmentedwaveMethod1994}
P.~E. Bl{\"o}chl.
\newblock Projector augmented-wave method.
\newblock {\em Physical Review B}, 50(24):17953--17979, 1994.

\bibitem{cancesNumericalAnalysisNonlinear2010}
E.~Canc{\`e}s, R.~Chakir, and Y.~Maday.
\newblock Numerical {{Analysis}} of {{Nonlinear Eigenvalue Problems}}.
\newblock {\em Journal of Scientific Computing}, 45(1):90--117, 2010.

\bibitem{cancesNumericalAnalysisplanewave2012}
E.~Canc{\`e}s, R.~Chakir, and Y.~Maday.
\newblock Numerical analysis of the planewave discretization of some orbital-free and {{Kohn-Sham}} models.
\newblock {\em ESAIM: Mathematical Modelling and Numerical Analysis}, 46(2):341--388, 2012.

\bibitem{friedmanRegularitySolutionsNonLinear1958}
A.~Friedman.
\newblock On the {{Regularity}} of the {{Solutions}} of {{Non-Linear Elliptic}} and {{Parabolic Systems}} of {{Partial Differential Equations}}.
\newblock {\em Indiana University Mathematics Journal}, 7(1):43--59, 1958.

\bibitem{giannozziQUANTUMESPRESSOModular2009}
P.~Giannozzi, S.~Baroni, N.~Bonini, M.~Calandra, R.~Car, C.~Cavazzoni, D.~Ceresoli, G.~L. Chiarotti, M.~Cococcioni, I.~Dabo, A.~Dal~Corso, S.~{de Gironcoli}, S.~Fabris, G.~Fratesi, R.~Gebauer, U.~Gerstmann, C.~Gougoussis, A.~Kokalj, M.~Lazzeri, L.~{Martin-Samos}, N.~Marzari, F.~Mauri, R.~Mazzarello, S.~Paolini, A.~Pasquarello, L.~Paulatto, C.~Sbraccia, S.~Scandolo, G.~Sclauzero, A.~P. Seitsonen, A.~Smogunov, P.~Umari, and R.~M. Wentzcovitch.
\newblock {{QUANTUM ESPRESSO}}: A modular and open-source software project for quantum simulations of materials.
\newblock {\em Journal of Physics. Condensed Matter: An Institute of Physics Journal}, 21(39):395502, 2009.

\bibitem{goedeckerSeparableDualspaceGaussian1996}
S.~Goedecker, M.~Teter, and J.~Hutter.
\newblock Separable dual-space {{Gaussian}} pseudopotentials.
\newblock {\em Physical Review B}, 54(3):1703, 1996.

\bibitem{gonzeAbinitProjectImpact2020}
X.~Gonze, B.~Amadon, G.~Antonius, F.~Arnardi, L.~Baguet, J.-M. Beuken, J.~Bieder, F.~Bottin, J.~Bouchet, E.~Bousquet, N.~Brouwer, F.~Bruneval, G.~Brunin, T.~Cavignac, J.-B. Charraud, W.~Chen, M.~C{\^o}t{\'e}, S.~Cottenier, J.~Denier, G.~Geneste, P.~Ghosez, M.~Giantomassi, Y.~Gillet, O.~Gingras, D.~R. Hamann, G.~Hautier, X.~He, N.~Helbig, N.~Holzwarth, Y.~Jia, F.~Jollet, W.~{Lafargue-Dit-Hauret}, K.~Lejaeghere, M.~A.~L. Marques, A.~Martin, C.~Martins, H.~P.~C. Miranda, F.~Naccarato, K.~Persson, G.~Petretto, V.~Planes, Y.~Pouillon, S.~Prokhorenko, F.~Ricci, G.-M. Rignanese, A.~H. Romero, M.~M. Schmitt, M.~Torrent, M.~J. {van Setten}, B.~Van~Troeye, M.~J. Verstraete, G.~Z{\'e}rah, and J.~W. Zwanziger.
\newblock The {{Abinit}} project: {{Impact}}, environment and recent developments.
\newblock {\em Computer Physics Communications}, 248:107042, 2020.

\bibitem{hartwigsenRelativisticSeparableDualspace1998}
C.~Hartwigsen, S.~Goedecker, and J.~Hutter.
\newblock Relativistic separable dual-space {{Gaussian}} pseudopotentials from {{H}} to {{Rn}}.
\newblock {\em Physical Review B}, 58(7):3641--3662, 1998.

\bibitem{hashimotoRemarkAnalyticitySolutions2006}
Y.~Hashimoto.
\newblock A {{Remark}} on the {{Analyticity}} of the {{Solutions}} for {{Non-Linear Elliptic Partial Differential Equations}}.
\newblock {\em Tokyo Journal of Mathematics}, 29(2):271--281, 2006.

\bibitem{herbstDFTKJulianApproach2021}
M.~F. Herbst, A.~Levitt, and E.~Canc{\`e}s.
\newblock {{DFTK}}: {{A Julian}} approach for simulating electrons in solids.
\newblock {\em Proceedings of the JuliaCon Conferences}, 3(26):69, 2021.

\bibitem{liuLimitedMemoryBFGS1989}
D.~C. Liu and J.~Nocedal.
\newblock On the limited memory {{BFGS}} method for large scale optimization.
\newblock {\em Mathematical Programming}, 45(1-3):503--528, Aug. 1989.

\bibitem{morreyAnalyticitySolutionsAnalytic1958}
C.~B. Morrey.
\newblock On the {{Analyticity}} of the {{Solutions}} of {{Analytic Non-Linear Elliptic Systems}} of {{Partial Differential Equations}}: {{Part I}}. {{Analyticity}} in the {{Interior}}.
\newblock {\em American Journal of Mathematics}, 80(1):198, 1958.

\bibitem{morreyAnalyticitySolutionsAnalytic1958b}
C.~B. Morrey.
\newblock On the {{Analyticity}} of the {{Solutions}} of {{Analytic Non-Linear Elliptic Systems}} of {{Partial Differential Equations}}: {{Part II}}. {{Analyticity}} at the {{Boundary}}.
\newblock {\em American Journal of Mathematics}, 80(1):219, 1958.

\bibitem{nottoliRobustOpensourceImplementation2023}
T.~Nottoli, I.~Giann{\`i}, A.~Levitt, and F.~Lipparini.
\newblock A robust, open-source implementation of the locally optimal block preconditioned conjugate gradient for large eigenvalue problems in quantum chemistry.
\newblock {\em Theoretical Chemistry Accounts}, 142(8):69, Aug. 2023.

\bibitem{perdewAccurateSimpleAnalytic1992}
J.~P. Perdew and Y.~Wang.
\newblock Accurate and simple analytic representation of the electron-gas correlation energy.
\newblock {\em Physical Review B}, 45(23):13244--13249, 1992.

\bibitem{petrovskiiAnalyticiteSolutionsSystemes1939}
I.~G. Petrovskii.
\newblock {Sur l'analyticit\'e des solutions des syst\`emes d'\'equations diff\'erentielles.}
\newblock {\em Matematiceskij sbornik}, 47(1):3--70, 1939.

\bibitem{ratcliffFlexibilitiesWaveletsComputational2020}
L.~E. Ratcliff, W.~Dawson, G.~Fisicaro, D.~Caliste, S.~Mohr, A.~Degomme, B.~Videau, V.~Cristiglio, M.~Stella, M.~D'Alessandro, S.~Goedecker, T.~Nakajima, T.~Deutsch, and L.~Genovese.
\newblock Flexibilities of wavelets as a computational basis set for large-scale electronic structure calculations.
\newblock {\em Journal of Chemical Physics}, 152(19):194110, 2020.

\bibitem{reedAnalysisOperators1978}
M.~Reed and B.~Simon.
\newblock {\em Analysis of Operators}.
\newblock Number~4 in Methods of {{Modern Mathematical Physics}}. {Academic Press}, 1978.

\bibitem{romeroABINITOverviewFocus2020}
A.~H. Romero, D.~C. Allan, B.~Amadon, G.~Antonius, T.~Applencourt, L.~Baguet, J.~Bieder, F.~Bottin, J.~Bouchet, E.~Bousquet, F.~Bruneval, G.~Brunin, D.~Caliste, M.~C{\^o}t{\'e}, J.~Denier, C.~Dreyer, P.~Ghosez, M.~Giantomassi, Y.~Gillet, O.~Gingras, D.~R. Hamann, G.~Hautier, F.~Jollet, G.~Jomard, A.~Martin, H.~P.~C. Miranda, F.~Naccarato, G.~Petretto, N.~A. Pike, V.~Planes, S.~Prokhorenko, T.~Rangel, F.~Ricci, G.-M. Rignanese, M.~Royo, M.~Stengel, M.~Torrent, M.~J. {van Setten}, B.~Van~Troeye, M.~J. Verstraete, J.~Wiktor, J.~W. Zwanziger, and X.~Gonze.
\newblock {{ABINIT}}: {{Overview}} and focus on selected capabilities.
\newblock {\em Journal of Chemical Physics}, 152(12):124102, 2020.

\bibitem{slaterSimplificationHartreeFockMethod1951}
J.~C. Slater.
\newblock A {{Simplification}} of the {{Hartree-Fock Method}}.
\newblock {\em Physical Review}, 81(3):385--390, 1951.

\bibitem{troullierEfficientPseudopotentialsPlanewave1991}
N.~Troullier and J.~L. Martins.
\newblock Efficient pseudopotentials for plane-wave calculations.
\newblock {\em Physical Review B}, 43(3):1993--2006, 1991.

\bibitem{vrabelDuffingTypeOscillatorBounded2013}
R.~Vrabel, P.~Tanuska, P.~Vazan, P.~Schreiber, and V.~Liska.
\newblock Duffing-{{Type Oscillator}} with a {{Bounded}} from above {{Potential}} in the {{Presence}} of {{Saddle-Center Bifurcation}} and {{Singular Perturbation}}: {{Frequency Control}}.
\newblock {\em Abstract and Applied Analysis}, 2013:1--7, 2013.

\bibitem{walterOrdinaryDifferentialEquations1998}
W.~Walter.
\newblock {\em Ordinary Differential Equations}.
\newblock Number 182 in Graduate Texts in Mathematics ; {{Readings}} in Mathematics. {Springer}, {New York}, 1998.

\bibitem{wazewskiSystemesEquationsInegualites1950}
T.~Wazewski.
\newblock Syst\`emes des \'equations et des in\'egualit\'es diff\'erentielles ordinaires aux deuxi\`emes membres monotones et leurs applications.
\newblock {\em Annales de la Soci\'et\'e polonaise de math\'ematique}, 23:112--166, 1950.

\end{thebibliography}
\nocite{*}
\bibliographystyle{abbrv}

\end{document}